\documentclass[12pt]{amsart}
\usepackage{amssymb}
\usepackage[all]{xy}

\oddsidemargin -0.5cm
\evensidemargin -0.5cm
\topskip     0pt
\headheight  0pt
\footskip   18pt
\textheight 23cm
\textwidth 17cm

\newtheorem{thm}{Theorem}[section]
\newtheorem{lem}[thm]{Lemma}
\newtheorem{cor}[thm]{Corollary}
\newtheorem{prop}[thm]{Proposition}
\newtheorem{ex}[thm]{Example}

\newtheorem*{prob*}{Open problem}

\theoremstyle{definition}

\newtheorem{defi}[thm]{Definition}

\theoremstyle{remark}

\newtheorem{rem}[thm]{Remark}
\newtheorem*{rem*}{Remark}


\DeclareMathOperator{\rad}{rad}

\DeclareMathOperator{\Aff}{Aff}
\DeclareMathOperator{\Aut}{Aut}

\DeclareMathOperator{\nil}{nil}

\newcommand{\kringel}{\mathbin{\raise1pt\hbox{$\scriptstyle\circ$}}}
\newcommand{\pkt}{\mathbin{\raise0pt\hbox{$\scriptstyle\bullet$}}}

\newcommand{\C}{\mathbb{C}}

\newcommand{\R}{\mathbb{R}}

\newcommand{\ad}{\mathop{\rm ad}}

\newcommand{\End}{\mathop{\rm End}}
\newcommand{\Der}{\mathop{\rm Der}}

\newcommand{\La}{\mathfrak{a}}

\newcommand{\Lf}{\mathfrak{f}}
\newcommand{\Lg}{\mathfrak{g}}
\newcommand{\Lh}{\mathfrak{h}}

\newcommand{\Ll}{\mathfrak{l}}
\newcommand{\Ln}{\mathfrak{n}}
\newcommand{\Lm}{\mathfrak{m}}
\newcommand{\Lr}{\mathfrak{r}}
\newcommand{\Ls}{\mathfrak{s}}

\newcommand{\im}{\mathop{\rm im}}

\newcommand{\al}{\alpha}
\newcommand{\be}{\beta}
\newcommand{\ga}{\gamma}

\newcommand{\la}{\lambda}

\newcommand{\ra}{\rightarrow}

\renewcommand{\phi}{\varphi}

\begin{document}


\title[Post-Lie Algebras]{Affine actions on Lie groups and post-Lie algebra structures}

\author[D. Burde]{Dietrich Burde}
\author[K. Dekimpe]{Karel Dekimpe}
\author[K. Vercammen]{Kim Vercammen}
\address{Fakult\"at f\"ur Mathematik\\
Universit\"at Wien\\
  Nordbergstr. 15\\
  1090 Wien \\
  Austria}
\email{dietrich.burde@univie.ac.at}
\address{Katholieke Universiteit Leuven\\
Campus Kortrijk\\
8500 Kortrijk\\
Belgium}
\email{karel.dekimpe@kuleuven-kortrijk.be}
\email{kim.vercammen@kuleuven-kortrijk.be}

\date{\today}

\subjclass{Primary 17B30, 17D25}
\thanks{The first author was supported by the FWF, Projekt P21683. He thanks
the K.U. Leuven Campus Kortrijk for its hospitality and support.
The second author expresses his gratitude towards the Erwin Schr\"odinger International
Institute for Mathematical Physics. The third author is supported by a 
Ph.D. fellowship of the FWO and by the Research Fund K.U.Leuven.}

\begin{abstract}
We introduce post-Lie algebra structures on pairs of Lie algebras $(\Lg,\Ln)$ defined on
a fixed vector space $V$. Special cases are LR-structures and pre-Lie algebra structures
on Lie algebras. We show that post-Lie algebra structures naturally arise in the study 
of NIL-affine actions on nilpotent Lie groups. We obtain several results on the existence
of post-Lie algebra structures, in terms of the algebraic structure of the two Lie algebras
$\Lg$ and $\Ln$. 
One result is, for example, that if there exists a post-Lie algebra structure on
$(\Lg,\Ln)$, where $\Lg$ is nilpotent, then $\Ln$ must be solvable. 
Furthermore special cases and examples are given. This includes a classification of all
complex, two-dimensional post-Lie algebras.
\end{abstract}

\maketitle

\section{Introduction}
Post-Lie algebras have been introduced recently by Vallette in \cite{VAL}, in connection with 
homology of partition posets, and the study of Koszul operads. Moreover, they have been 
discussed in several articles of Loday, see for example \cite{LOD} and the references given therein.
Furthermore, post-Lie algebras also turned up in relation with the classical
Yang-Baxter equation in \cite{BGN}.
In this paper, we will show that these algebras also naturally appear in yet another context, 
namely in that of affine actions on Lie groups. 
Let $N$ be a real, connected and simply connected nilpotent Lie group. Then
the group $\Aff(N)=N\rtimes \Aut(N)$ acts by so called NIL-affine transformations on
$N$ via the rule $^{(m,\alpha)}n=m\cdot \alpha(n)$ for all $m,n\in N$ and 
for all $\alpha \in \Aut(N)$. For the special case where $N=\R^n$, we obtain the
usual group of affine transformations $\Aff(\R^n)=\R^n\rtimes GL_n(\R)$ acting by
affine transformations on $\R^n$. Suppose that $G$ is a connected and simply connected
solvable Lie group of dimension $n$. In the seventies J. Milnor posed a famous question in 
\cite{MIL}, whether or not any such $G$ would admit a representation $\rho\colon G\ra \Aff(\R^n)$,
letting $G$ act {\it simply transitively} on $\R^n$. This question received a lot of attention,
including several articles trying to prove a positive answer. However, finally it turned out
that the answer was negative \cite{BEN, BUG}. The main ingredient in this study was the notion 
of a pre-Lie algebra. On the other hand, it was shown \cite{BAU,DEK} 
that for any such $G$ there exists a nilpotent Lie group $N$ and an embedding $\rho\colon 
G \hookrightarrow \Aff(N)$,
letting $G$ act simply transitively on $N$. This result was, among other things, a motivation
to study pairs of Lie groups $(G,N)$ where $G$ acts simply transitively via NIL-affine 
transformations on $N$. In \cite{BDD},\cite{BU34} and \cite{BU38} we have obtained 
results for several cases. Just as in the usual affine setting, the translation of the problem 
to the Lie algebra level plays a crucial role. As we will explain at the end of section $2$, it 
turns out that in this more general setting, one is naturally led to study 
{\it post-Lie algebra structures} on a pair $(\Lg,\Ln)$ of Lie algebras. In particular, the 
notion of a post-Lie algebra appears again. 

Not only motivated by this geometric context, but also because of the usefulness
in several other mathematical fields,
it is our aim to start in this paper a more detailed algebraic study of post-Lie algebra
structures.

\section{Post-Lie algebra structures}

Pre-Lie algebras play an important role in many areas, 
in particular in geometry and physics, see \cite{BU24} for a survey.
A post-Lie algebra is a certain generalization of a pre-Lie algebra. It has been
defined by Valette in \cite{VAL}.
However, we give here the definition such that the associated pre-Lie algebra is a left pre-Lie algebra (also known as
left symmetric algebra), whereas Valette uses right pre-Lie algebras instead.

In this paper all algebras will be finite dimensional over a field $k$ of characteristic 0.
\begin{defi}\label{postlie}
A {\it post-Lie algebra} $(V,\cdot,\{\, ,\})$ is a vector space $V$ over a field $k$
equipped with two $k$-bilinear operations $x\cdot y$ and $\{x,y\}$, which satisfy the
relations
\begin{align}
0 & = \{x,y\} + \{y,x\} \label{post0}\\
 0 & = \{\{x,y\},z\} +\{\{y,z\},x\} + \{\{z,x\},y\} \\
\{x,y\}\cdot z & = (y\cdot x)\cdot z - y\cdot (x\cdot z) -(x\cdot y)\cdot z+x\cdot (y\cdot z)
\label{post1} \\
x\cdot \{y,z\} & = \{x\cdot y,z\}+\{y,x\cdot z\} \label{post2}
\end{align}
for all $x,y,z \in V$.
\end{defi}

In particular a post-Lie algebra is a Lie algebra for the bracket $\{x,y\}$.
If this bracket is zero, then a post-Lie algebra is just a pre-Lie algebra.
Condition \eqref{post2} says, that the left multiplication $L(x)$, defined by
$L(x)y=x\cdot y$, is a derivation of the Lie algebra $(V,\{,\})$.

\begin{prop}
A post-Lie algebra $(V,\cdot,\{,\})$ has another associated Lie bracket, defined by
the formula
\begin{align}
[x,y] & = x\cdot y-y\cdot x +\{x,y\}. \label{post3}
\end{align}
\end{prop}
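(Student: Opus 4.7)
The plan is to verify the two Lie bracket axioms: antisymmetry and the Jacobi identity, for the operation $[x,y]=x\cdot y-y\cdot x+\{x,y\}$ defined in \eqref{post3}.

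Antisymmetry is essentially free: since $\{x,y\}=-\{y,x\}$ by \eqref{post0}, adding $[x,y]$ and $[y,x]$ gives $(x\cdot y-y\cdot x)+(y\cdot x-x\cdot y)+\{x,y\}+\{y,x\}=0$. So the real content is Jacobi.

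For the Jacobi identity, I would expand $[[x,y],z]$ directly using \eqref{post3}, obtaining three blocks of terms: the ``$\cdot$--$\cdot$'' terms $(x\cdot y)\cdot z-(y\cdot x)\cdot z-z\cdot(x\cdot y)+z\cdot(y\cdot x)$, the ``mixed'' terms $\{x,y\}\cdot z-z\cdot\{x,y\}$, and the pure bracket term $\{x\cdot y-y\cdot x+\{x,y\},z\}$. The key simplification is to apply \eqref{post1} to rewrite $\{x,y\}\cdot z$ as $(y\cdot x)\cdot z-y\cdot(x\cdot z)-(x\cdot y)\cdot z+x\cdot(y\cdot z)$, which absorbs all four purely ``$\cdot$'' products on the $(x\cdot y)\cdot z$ side and leaves the cleaner expression $x\cdot(y\cdot z)-y\cdot(x\cdot z)$; and to apply \eqref{post2} to rewrite $z\cdot\{x,y\}$ as $\{z\cdot x,y\}+\{x,z\cdot y\}$, converting that into bracket terms involving products.

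After this clean-up, $[[x,y],z]$ becomes a sum of $x\cdot(y\cdot z)-y\cdot(x\cdot z)-z\cdot(x\cdot y)+z\cdot(y\cdot x)$ together with a handful of brackets of the form $\{a\cdot b,c\}$ and the term $\{\{x,y\},z\}$. Now I would sum over cyclic permutations of $(x,y,z)$: the cyclic sum of the $\{\{x,y\},z\}$ terms vanishes by the Jacobi identity for $\{,\}$; the cyclic sum of the ``$\cdot\cdot$'' part cancels pairwise by regrouping (each term $u\cdot(v\cdot w)$ appears once with a $+$ and once with a $-$ under a cyclic rotation); and the remaining mixed bracket terms cancel in pairs after using the antisymmetry \eqref{post0} to flip arguments. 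The main obstacle is purely bookkeeping: keeping track of all the cyclically rotated terms and matching the $+$ and $-$ contributions correctly; there is no conceptual difficulty, since the axioms \eqref{post1} and \eqref{post2} are precisely designed so that left multiplications behave like derivations modulo $\{,\}$, which is exactly what a Jacobi verification for such a deformed bracket requires.
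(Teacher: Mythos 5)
Your proposal is correct and follows essentially the same route as the paper: expand the new bracket, take the cyclic sum, and cancel everything using \eqref{post0}--\eqref{post2} together with the Jacobi identity for $\{\, ,\}$. That you apply \eqref{post1} and \eqref{post2} to each term before cyclic summation, while the paper expands fully and groups the cyclic sum afterwards, is only a reorganization of the same bookkeeping.
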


\begin{proof}
Of course we have $[x,y]=-[y,x]$. Furthermore,
using the above identities for all $x,y,z\in V$, we have
\begin{align*}
[x,[y,z]] & = [x,y\cdot z-z\cdot y+\{y,z\}] \\
 & = [x,y\cdot z]-[x,z\cdot y]+[x,\{y,z\}] \\
 & =x\cdot(y\cdot z)-(y\cdot z)\cdot x+\{x,y\cdot z \}\\
 & -x\cdot(z\cdot y)+(z\cdot y)\cdot x -\{x,z\cdot y \}\\
 & +x\cdot\{y,z\}-\{y,z\}\cdot x+\{x,\{y, z \}\}.
\end{align*}

It follows that
\begin{align*}
[x,[y,z]]+[y,[z,x]]+[z,[x,y]] & = \{x,\{y,z\}\} + \{y,\{z,x\}\}+\{z,\{x,y\}\} \\
 & + x\cdot \{y,z\}+\{z,x\cdot y \}-\{y,x\cdot z\} \\
 & + y\cdot \{z,x\}+\{x,y\cdot z \}-\{z,y\cdot x\} \\
 & + z\cdot \{x,y\}+\{y,z\cdot x \}-\{x,z\cdot y\} \\
 & + (y\cdot x)\cdot z -y\cdot (x\cdot z)-(x\cdot y)\cdot z+x\cdot(y\cdot z)-\{x,y\}\cdot z \\
 & + (x\cdot z)\cdot y -x\cdot (z\cdot y)-(z\cdot x)\cdot y+z\cdot(x\cdot y)-\{z,x\}\cdot y \\
 & + (z\cdot y)\cdot x -z\cdot (y\cdot x)-(y\cdot z)\cdot x+y\cdot(z\cdot x)-\{y,z\}\cdot x \\
 & = 0.
\end{align*}
This shows that the Jacobi identity is satisfied.
\end{proof}

\begin{prop}
The Lie bracket \eqref{post3} associated to a  post-Lie algebra $(V,\cdot,\{\, ,\})$
satisfies the identity
\begin{align}
[x,y]\cdot z & = x\cdot (y\cdot z)-y\cdot (x\cdot z), \label{post4}
\end{align}
i.e., the post-Lie algebra is a left-module over the Lie algebra $(V,[\, ,])$.
\end{prop}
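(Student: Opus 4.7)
The plan is a direct computation, essentially just rearranging relation \eqref{post1}. I would start by expanding $[x,y]\cdot z$ using the defining formula \eqref{post3} and $k$-bilinearity of $\cdot$ in the first slot:
\begin{align*}
[x,y]\cdot z = (x\cdot y)\cdot z - (y\cdot x)\cdot z + \{x,y\}\cdot z.
\end{align*}

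Next I would substitute the axiom \eqref{post1}, which gives
\begin{align*}
\{x,y\}\cdot z = (y\cdot x)\cdot z - y\cdot (x\cdot z) - (x\cdot y)\cdot z + x\cdot (y\cdot z).
\end{align*}
Plugging this into the previous line, the terms $(x\cdot y)\cdot z$ and $(y\cdot x)\cdot z$ cancel with their opposite-sign counterparts, leaving exactly $x\cdot(y\cdot z) - y\cdot(x\cdot z)$, which is the desired identity \eqref{post4}.

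There is no real obstacle here; the statement is essentially a reformulation of \eqref{post1} once the bracket $[\,,\,]$ has been defined by \eqref{post3}. The only thing worth remarking is the interpretation at the end: the identity \eqref{post4} says that the map $x \mapsto L(x)$, where $L(x)y = x\cdot y$, is a Lie algebra homomorphism from $(V,[\,,\,])$ to $\End(V)$, so that $(V,\cdot)$ becomes a left module over $(V,[\,,\,])$. This complements the earlier observation that, by \eqref{post2}, each $L(x)$ is a derivation of the other Lie bracket $\{\,,\,\}$.
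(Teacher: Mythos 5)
Your proof is correct and is essentially identical to the paper's: expand $[x,y]\cdot z$ via the definition \eqref{post3}, substitute axiom \eqref{post1} for $\{x,y\}\cdot z$, and cancel the remaining terms. Nothing further is needed.
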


\begin{proof}
For all $x,y,z\in V$ we have
\begin{align*}
[x,y]\cdot z & = (x\cdot y)\cdot z-(y\cdot x)\cdot z+\{x,y\}\cdot z \\
 & =  (x\cdot y)\cdot z-(y\cdot x)\cdot z + (y\cdot x)\cdot z - y\cdot (x\cdot z)
- (x\cdot y)\cdot z+x\cdot (y\cdot z) \\
 & = x\cdot (y\cdot z)-y\cdot (x\cdot z).
\end{align*}
\end{proof}

Let $(\Lg, [\, ,])$ and $(\Ln, \{\, ,\})$ be two Lie algebras with the same underlying vector
space $V$ over a field $k$. We call $(\Lg,\Ln)$ a {\it pair} of Lie algebras over $k$.
So as sets or vector spaces, $\Lg=\Ln=V$. In the sequel, when talking about a pair of Lie 
algebras $(\Lg,\Ln)$, we will always denote the Lie bracket in $\Lg$ with square brackets 
$[x,y]$ and those in $\Ln$ with curly brackets $\{x,y\}$ and the underlying vector space of 
both $\Lg$ and $\Ln$ will be denoted by $V$. 
Furthermore, when we write $\ad(x)$, we will always mean the adjoint operator in the Lie algebra $\Ln$. 

\begin{defi}\label{plt}
Let $(\Lg,\Ln)$ be a pair of Lie algebras. 
A {\it post-Lie algebra structure} on the pair $(\Lg,\Ln)$ is a $k$-bilinear product
$x\cdot y$ on $V$ satisfying the following identities:
\begin{align}
x\cdot y -y\cdot x & = [x,y]-\{x,y\} \label{post5}\\
[x,y]\cdot z & = x\cdot (y\cdot z) -y\cdot (x\cdot z) \label{post6}\\
x\cdot \{y,z\} & = \{x\cdot y,z\}+\{y,x\cdot z\} \label{post7}
\end{align}
for all $x,y,z \in V$.
\end{defi}

We obtain the following corollary by using \eqref{post6} and \eqref{post7}.

\begin{lem}\label{2.5}
The map $L\colon \Lg\ra \End(V)$ given by $x\mapsto L(x)$ is a linear representation of the
Lie algebra $\Lg$. Furthermore all operators $L(x)$ are Lie algebra derivations of $\Ln$.
\end{lem}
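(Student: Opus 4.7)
The plan is to read off both claims essentially verbatim from the two defining identities \eqref{post6} and \eqref{post7}, after rephrasing them in terms of the left multiplication operator $L(x)\colon V \to V$, $y \mapsto x\cdot y$.

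For the representation statement, I would rewrite \eqref{post6} as
\[
L([x,y])\,z = L(x)L(y)\,z - L(y)L(x)\,z
\]
for every $z \in V$. Since this holds for all $z$, it gives the operator identity $L([x,y]) = L(x)L(y) - L(y)L(x) = [L(x),L(y)]$ in $\End(V)$, which (together with the obvious $k$-linearity of $x \mapsto L(x)$) is exactly the statement that $L$ is a Lie algebra homomorphism from $(\Lg, [\,,])$ to $\End(V)$ equipped with the commutator bracket.

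For the derivation statement, I would observe that \eqref{post7} is literally the Leibniz rule for $L(x)$ acting on the bracket of $\Ln$:
\[
L(x)\{y,z\} = \{L(x)y,z\} + \{y, L(x)z\}.
\]
Since this holds for all $y,z \in V$, the operator $L(x) \in \End(V)$ is by definition a derivation of the Lie algebra $(V,\{\,,\}) = \Ln$.

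There is no real obstacle here; the content of the lemma is precisely that axioms \eqref{post6} and \eqref{post7} are the operator-level reformulations of ``$L$ is a representation of $\Lg$'' and ``each $L(x)$ is a derivation of $\Ln$''. The only thing to be careful about is bookkeeping: the bracket in \eqref{post6} is the bracket of $\Lg$ (square brackets) while the bracket in \eqref{post7} is that of $\Ln$ (curly brackets), in accordance with the notational convention fixed just before Definition~\ref{plt}.
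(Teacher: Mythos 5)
Your proof is correct and is exactly the argument the paper intends: the paper gives no separate proof, merely noting that the lemma follows from \eqref{post6} and \eqref{post7}, and your rewriting of these axioms as $L([x,y])=[L(x),L(y)]$ and the Leibniz rule for $L(x)$ on $\{\,,\}$ spells out precisely that observation.
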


We can derive several consequences of the identities given in the definition.

\begin{lem}\label{lem1.5}
The axioms \eqref{post5},\eqref{post6},\eqref{post7} imply the following identities:
\begin{align}
\{x,y\}\cdot z & = (y\cdot x)\cdot z - y\cdot (x\cdot z) -(x\cdot y)\cdot z
+x\cdot (y\cdot z) \label{post8} \\[0.2cm]
z\cdot [x,y] & = z\cdot (x\cdot y)-z\cdot (y\cdot x)+z\cdot \{x,y\}
\label{post9} \\[0.2cm]
[x\cdot y,z]+[y,x\cdot z]-x\cdot [y,z] & = (x\cdot y)\cdot z-(x\cdot z)\cdot y
 + y\cdot (x\cdot z)-x\cdot(y\cdot z) \label{post10} \\
 & \, + \; x\cdot (z\cdot y)-z\cdot (x\cdot y) \nonumber \\[0.2cm]
x\cdot \{y,z\}+y\cdot\{z,x\}+z\cdot\{x,y\} & = \{[x,y],z\}+ \{[y,z],x\}
+\{[z,x],y\} \label{post11}\\[0.2cm]
\{x,y\}\cdot z+\{y,z\}\cdot x+\{z,x\}\cdot y & = \{[x,y],z\}+ \{[y,z],x\}+ \{[z,x],y\}
\label{post12}\\
 & \, +[\{x,y\},z]+[\{y,z\},x]+[\{z,x\},y]\nonumber
\end{align}
for all $x,y,z \in V$.
\end{lem}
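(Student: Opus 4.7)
The plan is to derive each of the five identities by a direct, if somewhat tedious, manipulation of the three defining axioms \eqref{post5}--\eqref{post7}, together with the Jacobi identities of $[\,,]$ and $\{\,,\}$ and the antisymmetry of $\{\,,\}$. The central technical move is equation \eqref{post5}, which lets one trade $\{x,y\}$ for $[x,y]-(x\cdot y-y\cdot x)$ (and vice versa) whenever it is convenient.

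For \eqref{post8}, the natural approach is to start from \eqref{post5} in the form $\{x,y\} = [x,y] - x\cdot y + y\cdot x$, hit both sides with $\cdot z$ on the right, and then replace $[x,y]\cdot z$ by $x\cdot(y\cdot z)-y\cdot(x\cdot z)$ via \eqref{post6}. The result is exactly \eqref{post8}. Identity \eqref{post9} is even cheaper: it is just \eqref{post5} (with roles interchanged) multiplied by $z$ on the left, using $k$-bilinearity of $\cdot$. These two are warm-ups and should pose no obstacle.

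For \eqref{post10}, I would expand each of the three brackets $[x\cdot y,z]$, $[y,x\cdot z]$, $x\cdot[y,z]$ using \eqref{post5}; this produces six $\cdot$-terms plus three $\{\,,\}$-terms of the form $\{x\cdot y,z\}$, $\{y,x\cdot z\}$, $-x\cdot\{y,z\}$. Axiom \eqref{post7} precisely kills that $\{\,,\}$-combination, leaving exactly the right-hand side of \eqref{post10} after rearrangement.

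For \eqref{post11}, I would take the cyclic sum of \eqref{post7} over $(x,y,z)$, then use antisymmetry of $\{\,,\}$ to rewrite the six right-hand terms as $\{x\cdot y-y\cdot x,z\}+\{y\cdot z-z\cdot y,x\}+\{z\cdot x-x\cdot z,y\}$; replacing each $a\cdot b-b\cdot a$ by $[a,b]-\{a,b\}$ via \eqref{post5} and applying the Jacobi identity of $\Ln$ to the three $\{\{\cdot,\cdot\},\cdot\}$-terms yields \eqref{post11}. Finally, \eqref{post12} is a direct bookkeeping consequence of \eqref{post11}: substitute $[\{x,y\},z]=\{x,y\}\cdot z-z\cdot\{x,y\}+\{\{x,y\},z\}$ (from \eqref{post5}) cyclically, use the Jacobi identity of $\Ln$ to kill $\{\{x,y\},z\}+\{\{y,z\},x\}+\{\{z,x\},y\}$, and compare with \eqref{post11} to identify the $x\cdot\{y,z\}$-cyclic sum on the right-hand side.

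The steps are all routine; the only real obstacle is keeping track of signs and cyclic permutations in \eqref{post10}--\eqref{post12}, where a double-digit number of terms must be combined and re-grouped. Organising the calculation as "expand brackets via \eqref{post5}, then cancel via \eqref{post7} or by Jacobi in $\Ln$" keeps the book-keeping under control.
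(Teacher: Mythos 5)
Your proposal is correct and follows essentially the same route as the paper: \eqref{post8} from \eqref{post5} and \eqref{post6}, \eqref{post9} by bilinearity from \eqref{post5}, \eqref{post10} by trading curly brackets via \eqref{post5} and cancelling with \eqref{post7}, and \eqref{post11}, \eqref{post12} by combining \eqref{post5}, \eqref{post7}, antisymmetry and the Jacobi identity of $\Ln$ (the paper merely runs \eqref{post11} starting from the Jacobi identity rather than from the cyclic sum of \eqref{post7}, which is the same computation in reverse). No gaps.
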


\begin{proof}
Using \eqref{post5} and \eqref{post6} we obtain
\begin{align*}
\{x,y\}\cdot z & = ([x,y]-x\cdot y+y\cdot x)\cdot z \\
               & = [x,y]\cdot z-(x\cdot y)\cdot z+(y\cdot x)\cdot z \\
               & = (y\cdot x)\cdot z - y\cdot (x\cdot z) -(x\cdot y)\cdot z+x\cdot (y\cdot z).
\end{align*}
This gives \eqref{post8} which is just \eqref{post1}. Identity \eqref{post9} follows directly
by \eqref{post5}. Using \eqref{post5} and \eqref{post7} we obtain
\begin{align*}
x\cdot (y\cdot z) -x\cdot (z\cdot y) & = x\cdot ([y,z]-\{y,z\}) \\
               & = x\cdot [y,z]-x\cdot \{y,z\} \\
               & = x\cdot [y,z]-\{x\cdot y,z\}-\{y, x\cdot z\}\\
               & = x\cdot [y,z]-([x\cdot y,z]+z\cdot (x\cdot y)-(x\cdot y)\cdot z)\\
               & -([y,x\cdot z]+(x\cdot z)\cdot y-y\cdot (x\cdot z)).
\end{align*}

This gives \eqref{post10}. Using \eqref{post5},\eqref{post7} and the Jacobi identity for
$\{\, ,\}$ we have

\begin{align*}
0 & =  \{\{x,y\},z\}+  \{\{y,z\},x\} + \{\{z,x\},y\}\\
               & = \{ [x,y]-x\cdot y+y\cdot x,z\}+ \{ [y,z]-y\cdot z+z\cdot y,x\}+
\{ [z,x]-z\cdot x+x\cdot z,y\}\\
               & = \{[x,y], z\}-\{x\cdot y,z\}+\{y\cdot x,z\}+
                   \{[y,z], x\}-\{y\cdot z,x\}+\{z\cdot y,x\}\\
               & \,+ \{[z,x], y\}-\{z\cdot x,y\}+\{x\cdot z,y\}\\
               & = \{[x,y],z\}+ \{[y,z],x\}+\{[z,x],y\}-x\cdot \{y,z\}-y\cdot\{z,x\}-z\cdot\{x,y\}.
\end{align*}

This is \eqref{post11}. For \eqref{post12} use \eqref{post5} in the following way:

\begin{align*}
0 & =  \{\{x,y\},z\}+  \{\{y,z\},x\} + \{\{z,x\},y\}\\
  & = [\{x,y\},z] - \{x,y\}\cdot z+z\cdot \{x,y\}+[\{y,z\},x] - \{y,z\}\cdot x+x\cdot \{y,z\}\\
  & \, +[\{z,x\},y] - \{z,x\}\cdot y+y\cdot \{z,x\}.
\end{align*}
By applying \eqref{post11} the identity \eqref{post12} follows.
\end{proof}

\begin{cor}
Let $x\cdot y$ be a post-Lie algebra structure on $(\Lg,\Ln)$. Then $(V,\cdot,\{\, , \})$
is a post-Lie algebra with associated second Lie algebra $\Lg$.
Conversely, if $(V,\cdot,\{\, , \})$ is a post-Lie algebra, with associated second Lie algebra 
$\Lg=(V,[\, ,])$, then $x\cdot y$ is a post-Lie algebra structure on the pair $(\Lg,\Ln)$,
where $\Ln=(V,\{\, , \}).$
\end{cor}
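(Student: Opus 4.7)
The plan is to recognize that this corollary is essentially a bookkeeping statement asserting that Definition \ref{postlie} and Definition \ref{plt} describe the same object, with the bridge between the two being the identity $[x,y] = x\cdot y - y\cdot x + \{x,y\}$, equivalently axiom \eqref{post5}. So the proof should reduce in each direction to matching up axioms, using the propositions and Lemma \ref{lem1.5} already established.

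For the forward direction, assume $x\cdot y$ is a post-Lie algebra structure on $(\Lg,\Ln)$. Since $\Ln$ is a Lie algebra by hypothesis, the skew-symmetry \eqref{post0} and the Jacobi identity for $\{\,,\}$ required by Definition \ref{postlie} hold automatically. Axiom \eqref{post2} of Definition \ref{postlie} is literally axiom \eqref{post7}, and axiom \eqref{post1} is precisely the identity \eqref{post8}, which was derived from \eqref{post5} and \eqref{post6} in Lemma \ref{lem1.5}. Hence $(V,\cdot,\{\,,\})$ is a post-Lie algebra. Its associated second Lie bracket, given by \eqref{post3}, is $x\cdot y - y\cdot x + \{x,y\}$, which by \eqref{post5} equals $[x,y]$; so the second Lie algebra is exactly $\Lg$.

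For the converse, assume $(V,\cdot,\{\,,\})$ is a post-Lie algebra with associated second Lie algebra $\Lg = (V,[\,,])$, so that $[x,y] = x\cdot y - y\cdot x + \{x,y\}$ by \eqref{post3}. Then $\Ln = (V,\{\,,\})$ is a Lie algebra by hypothesis, while $\Lg$ is a Lie algebra by the first proposition of this section. Axiom \eqref{post5} now holds by the very definition of $[\,,]$; axiom \eqref{post6} is the identity \eqref{post4}, proved in the second proposition of this section; and axiom \eqref{post7} is literally axiom \eqref{post2} of Definition \ref{postlie}. Therefore $x\cdot y$ is a post-Lie algebra structure on the pair $(\Lg,\Ln)$.

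There is no substantive obstacle, since every nontrivial computation has been performed in the two preceding propositions and in Lemma \ref{lem1.5}. The only point requiring a little care is to draw the Lie algebra status of $\Lg$ (respectively $\Ln$) from the correct source in each direction: from the hypothesis on the pair in the forward implication, and from the associated-bracket proposition in the converse.
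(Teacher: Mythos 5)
Your proposal is correct and follows essentially the same route as the paper's own (much terser) proof: both directions reduce to matching the axioms of Definition~\ref{postlie} and Definition~\ref{plt} via \eqref{post5}/\eqref{post3}, using identity \eqref{post8} from Lemma~\ref{lem1.5} for one direction and identity \eqref{post4} from the preceding proposition for the other. Your version merely makes explicit the references that the paper leaves implicit, which is fine.
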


\begin{proof}
The axioms of a post-Lie algebra structure $x\cdot y$ on $(\Lg,\Ln)$ imply the
conditions \eqref{post0}--\eqref{post2} and also imply that $\Lg$ is the associated Lie algebra.
Conversely, the defining axioms of a post-Lie algebra and the definition \eqref{post3} of the associated Lie 
algebra show $\Ln$ is really a Lie algebra and that $x\cdot y$ is a post-Lie algebra stucture on $(\Lg,\Ln)$.
\end{proof}

\begin{ex}\label{zero}
Suppose that the post-Lie algebra structure on $(\Lg,\Ln)$ is given by the zero product.
Then $(\Lg,[\, ,])=(\Ln,\{\, ,\})$.
\end{ex}
Indeed, $x\cdot y=0$ implies $[x,y]=\{x,y\}$ for all $x,y\in V$.

\begin{ex}\label{lsa}
If $\Ln$ is abelian, then a post-Lie algebra structure on $(\Lg,\Ln)$ corresponds to
a pre-Lie algebra structure on $\Lg$.
\end{ex}

If $\{x,y\}=0$ for all $x,y\in V$, then the conditions reduce to
\begin{align*}
x\cdot y-y\cdot x & = [x,y], \\
[x,y]\cdot z & = x\cdot (y\cdot z)-y\cdot (x\cdot z),
\end{align*}
i.e., $x\cdot y$ is a {\it pre-Lie algebra structure} on the Lie algebra $\Lg$. 
Conversely, a pre-Lie algebra structure on a Lie algebra $\Lg$ induces a post-Lie algebra structure 
on the pair of Lie algebras $(\Lg,\Ln)$, where $\Ln$ is the abelian Lie algebra on the same underlying vector 
space as $\Lg$.

\begin{ex}
If $\Lg$ is abelian,  then a post-Lie algebra structure on $(\Lg,\Ln)$ corresponds to
an LR-structure on $\Ln$.
\end{ex}

If $\Lg$ is abelian, then the conditions reduce to
\begin{align*}
x\cdot y-y\cdot x & = -\{x,y\} \\
x\cdot (y\cdot z)& = y\cdot (x\cdot z), \\
(x\cdot y)\cdot z & =(x\cdot z)\cdot y,
\end{align*}
i.e., $-x\cdot y$ is an {\it LR-structure} on the Lie algebra $\Ln$ (\cite{BU34}).
This follows from \eqref{post5}, \eqref{post6} and \eqref{post10}.
Conversely, an LR-structure on a Lie algebra $\Ln$ induces a post-Lie algebra structure 
on the pair of Lie algebras $(\Lg,\Ln)$, where $\Lg$ is the abelian Lie algebra on the same underlying vector 
space as $\Ln$.\\
If the Lie algebra $\Ln$ is complete, then we can say more on post-Lie algebra
structures on $(\Lg,\Ln)$. Recall that a Lie algebra $\Ln$ is called {\it complete}, if
$\Der(\Ln)=\ad(\Ln)$ and $Z(\Ln)=0$. 

\begin{lem}
Suppose that $x\cdot y$ is a post-Lie algebra structure on $(\Lg,\Ln)$, where
$\Ln$ is complete. Then there is a unique linear map
$\phi\colon V\ra V$ such that $x\cdot y=\{\phi(x),y\}$, i.e., satisfying $L(x)=\ad(\phi(x))$.
\end{lem}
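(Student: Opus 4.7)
The plan is to obtain $\phi$ pointwise from the structure of complete Lie algebras, then deduce uniqueness and linearity from the triviality of the centre.

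First, I would invoke Lemma \ref{2.5}: for every $x \in V$, the operator $L(x) \in \End(V)$ is a derivation of $\Ln$. Since $\Ln$ is complete we have $\Der(\Ln) = \ad(\Ln)$, so there exists some element $\phi(x) \in V$ with $L(x) = \ad(\phi(x))$. Equivalently, $x \cdot y = L(x)y = \{\phi(x), y\}$ for all $y \in V$. This already establishes the pointwise existence of $\phi$ as a set-theoretic map.

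Next, I would use $Z(\Ln) = 0$ to pin down uniqueness. The map $\ad \colon \Ln \to \Der(\Ln)$ is a linear map whose kernel is exactly $Z(\Ln)$, hence here it is injective. Consequently the element $\phi(x)$ satisfying $\ad(\phi(x)) = L(x)$ is uniquely determined, which gives the well-definedness of $\phi$.

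For linearity, I would exploit that $L$ itself is linear (being a representation, again by Lemma \ref{2.5}). Given $x, y \in V$ and $\la, \mu \in k$, both sides of
\[
\ad\bigl(\phi(\la x + \mu y)\bigr) = L(\la x + \mu y) = \la L(x) + \mu L(y) = \ad\bigl(\la \phi(x) + \mu \phi(y)\bigr)
\]
agree as derivations of $\Ln$, so injectivity of $\ad$ forces $\phi(\la x + \mu y) = \la \phi(x) + \mu \phi(y)$.

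There is no real obstacle here: the content of the lemma is entirely packaged into the two defining properties of completeness, once one has recognised (via Lemma \ref{2.5}) that the left multiplications are derivations of $\Ln$. The only point deserving a line of justification is that $L$ is linear in $x$, but this is part of its being a linear representation.
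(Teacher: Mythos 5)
Your proposal is correct and follows essentially the same route as the paper: existence of $\phi(x)$ from $\Der(\Ln)=\ad(\Ln)$ via Lemma \ref{2.5}, uniqueness from $Z(\Ln)=\ker(\ad)=0$, and linearity from bilinearity of the product (the paper phrases this as $\{\phi(x+x'),y\}=\{\phi(x)+\phi(x'),y\}$ for all $y$ plus trivial center, which is exactly your injectivity-of-$\ad$ argument).
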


\begin{proof}
For any $x\in V$, we have $L(x)\in \Der(\Ln)=\ad(\Ln)$ by Lemma $\ref{2.5}$. As $\Ln$ has trivial center, there is a 
unique element $\phi(x)\in \Ln$ such that $L(x)=\ad (\phi(x))$, which defines the map $\phi\colon V\ra V$. For $x,x',y\in V$ we have
\begin{align*}
\{\phi(x+x'),y\} & = (x+x')\cdot y \\
                 & = x\cdot y+x'\cdot y \\
                 &= \{\phi(x)+\phi(x'),y\}.
\end{align*}
It follows $\phi(x+x')=\phi(x)+\phi(x')$, because $\Ln$ has trivial center. In the same way we
obtain $\phi(\la x)=\la \phi(x)$, hence $\phi$ is linear.
\end{proof}

Inspired by the above, we now show the following result, which applies in particular for $\Ln$ being semisimple.

\begin{prop}
Let $(\Lg,\Ln)$ be a pair of Lie algebras such that $\Ln$ has trivial center. Let $\phi\in \End(V)$. Then
the product $x\cdot y=\{\phi(x),y \}$ is a post-Lie algebra structure on $(\Lg,\Ln)$
if and only if
\begin{align*}
\{\phi(x),y\}+\{x,\phi(y)\} & =[x,y]-\{x,y\},\\
\phi([x,y]) & = \{\phi(x),\phi(y)\} 
\end{align*}
for all $x,y\in V$.
\end{prop}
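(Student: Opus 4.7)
The plan is simply to substitute the ansatz $x\cdot y=\{\phi(x),y\}$ into each of the three defining axioms \eqref{post5}, \eqref{post6}, \eqref{post7} for a post-Lie algebra structure, and see which ones reduce to identities in $\Ln$ and which ones produce the two listed conditions. The trivial-center hypothesis on $\Ln$ will only be needed to cancel a $z$ from one of the resulting equations.

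First I would treat \eqref{post5}. Using antisymmetry of $\{\,,\}$ we get
\[
x\cdot y - y\cdot x \;=\; \{\phi(x),y\} - \{\phi(y),x\} \;=\; \{\phi(x),y\} + \{x,\phi(y)\},
\]
so \eqref{post5} is literally the first condition $\{\phi(x),y\}+\{x,\phi(y)\}=[x,y]-\{x,y\}$.

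Next I would dispose of \eqref{post7}, which reads $\{\phi(x),\{y,z\}\}=\{\{\phi(x),y\},z\}+\{y,\{\phi(x),z\}\}$ after substitution — this is just the Jacobi identity in $\Ln$ applied to $\phi(x),y,z$, so it holds automatically and gives no condition. (This is consistent with Lemma~\ref{2.5}: $L(x)=\ad(\phi(x))$ is already a derivation of $\Ln$.)

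Finally I would handle \eqref{post6}. The left-hand side becomes $\{\phi([x,y]),z\}$, while the right-hand side becomes
\[
\{\phi(x),\{\phi(y),z\}\} - \{\phi(y),\{\phi(x),z\}\},
\]
which by the Jacobi identity in $\Ln$ equals $\{\{\phi(x),\phi(y)\},z\}$. Hence \eqref{post6} is equivalent to
\[
\{\phi([x,y]) - \{\phi(x),\phi(y)\},\, z\} = 0 \qquad \text{for all } z\in V.
\]
This is the only place where the hypothesis $Z(\Ln)=0$ enters: it lets us strip off the arbitrary $z$ and conclude that \eqref{post6} is equivalent to $\phi([x,y])=\{\phi(x),\phi(y)\}$, which is the second condition. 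Conversely, if both listed conditions hold, the computations above are reversible, so $x\cdot y=\{\phi(x),y\}$ satisfies all of \eqref{post5}--\eqref{post7}. I expect no real obstacle beyond keeping track of signs and making sure the trivial-center hypothesis is invoked only in the step where it is genuinely needed.
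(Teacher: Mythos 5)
Your proposal is correct and follows essentially the same route as the paper: substitute $x\cdot y=\{\phi(x),y\}$ into the axioms, read off the first condition from \eqref{post5}, note \eqref{post7} is the Jacobi identity, and use \eqref{post6} together with Jacobi and $Z(\Ln)=0$ to obtain $\phi([x,y])=\{\phi(x),\phi(y)\}$. Your version is in fact slightly more explicit than the paper on the converse, since you observe that each step is an equivalence (the paper merely asserts that the converse "can also be shown").
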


\begin{proof} Assume that $x\cdot y=\{\phi(x),y \}$ is a post-Lie algebra structure on $(\Lg,\Ln)$. Then 
the first identity follows immediately from \eqref{post5}. The second one follows from
\eqref{post6} and the Jacobi identity for $\Ln$. For $x,y,z\in V$ we have
\begin{align*}
\{ \phi([x,y]),z\} & = [x,y]\cdot z \\
                   & = x\cdot (y\cdot z)-y\cdot (x\cdot z) \\
                   & = x\cdot \{\phi(y),z\}-y\cdot \{\phi(x),z \} \\
                   & = \{\phi(x),\{\phi(y),z\}\}-\{\phi(y),\{\phi(x),z\}\} \\
                   & = \{\{\phi(x),\phi(y)\},z\}.
\end{align*}
Since $Z(\Ln)=0$ the claim follows, i.e., the map $\phi\colon \Lg\ra \Ln$ is a Lie algebra 
homomorphism.\\
Conversely, one can also show that when the two indentities are satisfied, the product 
 $x\cdot y=\{\phi(x),y \}$ does define a post-Lie algebra structure on $(\Lg,\Ln)$.
\end{proof}

The next result shows that we have a correspondence between post-Lie algebra
structures on $(\Lg,\Ln)$ and embeddings $\Lg \hookrightarrow \Ln\rtimes \Der(\Ln)$, where we recall that the 
Lie bracket on $ \Ln\rtimes \Der(\Ln)$ is given by
\[
[(x,D),(x',D')]=(\{x,x'\}+D(x')-D'(x),[D,D']).
\]

\begin{prop}\label{inj hom}
Let $x\cdot y$ be a post-Lie algebra structure on $(\Lg,\Ln)$. Then the map
\[
\phi\colon \Lg \ra \Ln\rtimes \Der(\Ln),\; x\mapsto (x,L(x))
\]
is an injective homomorphism of Lie algebras. Conversely any such embedding, with the identity
map on the first factor yields a post-Lie algebra structure onto $(\Lg,\Ln)$.
\end{prop}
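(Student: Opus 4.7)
The plan is to verify that the prescribed map $\phi(x) = (x, L(x))$ is well-defined as a map into $\Ln \rtimes \Der(\Ln)$, check the Lie algebra homomorphism property directly using the semidirect product bracket, observe injectivity from the first factor, and then run the argument backwards for the converse.

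First I would confirm $\phi$ is well-defined: by Lemma \ref{2.5} each $L(x)$ is a derivation of $\Ln$, so $(x, L(x)) \in \Ln \rtimes \Der(\Ln)$, and linearity of $\phi$ is immediate since $L$ is linear. Next I would compute both sides of the candidate homomorphism identity. On the one hand,
\[
\phi([x,y]) = ([x,y],\, L([x,y])).
\]
On the other hand, using the given bracket on the semidirect product,
\[
[\phi(x),\phi(y)] = \bigl(\{x,y\} + L(x)(y) - L(y)(x),\ [L(x),L(y)]\bigr) = \bigl(\{x,y\} + x\cdot y - y\cdot x,\ [L(x),L(y)]\bigr).
\]
Equality in the first coordinate is precisely axiom \eqref{post5}, while equality in the second coordinate is exactly the statement that $L$ is a representation of $\Lg$, which is the content of Lemma \ref{2.5}. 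Injectivity is trivial: the projection onto the first factor recovers $x$, so $\phi(x)=0$ forces $x=0$.

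For the converse, suppose $\psi\colon \Lg \hookrightarrow \Ln\rtimes\Der(\Ln)$ is an injective Lie algebra homomorphism whose first component is the identity. Then $\psi(x) = (x, D(x))$ for a unique linear map $D\colon V \to \Der(\Ln)$, and I would define $x\cdot y := D(x)(y)$. Comparing the two coordinates of $\psi([x,y]) = [\psi(x),\psi(y)]$ as above yields
\[
[x,y] = \{x,y\} + x\cdot y - y\cdot x \qquad \text{and} \qquad D([x,y]) = [D(x),D(y)],
\]
which are \eqref{post5} and (after evaluating the second equation at $z \in V$) axiom \eqref{post6}. Finally, axiom \eqref{post7} follows immediately from the fact that each $D(x)$ lies in $\Der(\Ln)$, so $x\cdot\{y,z\} = D(x)\{y,z\} = \{D(x)y,z\}+\{y,D(x)z\} = \{x\cdot y,z\}+\{y,x\cdot z\}$.

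There is no serious obstacle here; the proof is essentially a two-coordinate bookkeeping exercise. The only thing worth emphasising is that the three defining axioms of a post-Lie structure match up perfectly with the three pieces of data encoded in the embedding: \eqref{post5} captures the first coordinate of the bracket relation, \eqref{post6} captures the second coordinate (equivalently, that $L$ is a representation), and \eqref{post7} is exactly the requirement that the second coordinate land in $\Der(\Ln)$ rather than merely in $\End(V)$.
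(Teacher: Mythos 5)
Your proof is correct and follows essentially the same route as the paper: compute the semidirect-product bracket of $(x,L(x))$ and $(y,L(y))$, match the first coordinate with \eqref{post5}, the second with \eqref{post6} (i.e.\ $L$ being a representation), use \eqref{post7} to ensure $L(x)\in\Der(\Ln)$, and reverse the same bookkeeping for the converse. You merely make explicit some points the paper leaves implicit (well-definedness via Lemma \ref{2.5}, injectivity from the first coordinate, and the details of the converse), which is fine.
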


\begin{proof}
Let $x\cdot y$ be a post-Lie algebra structure on $(\Lg,\Ln)$. We have
\begin{align*}
[\phi(x),\phi(y)] & = [(x,L(x)),(y,L(y))] \\
 & = (\{x,y\}+x\cdot y-y\cdot x,[L(x),L(y)]) \\
 & = ([x,y], L([x,y])) \\
 & = \phi([x,y]),
\end{align*}
where we have used \eqref{post5},\eqref{post6},\eqref{post7}.
Conversely, if we have a given embedding $\phi(x)=(x,L(x))$ with a derivation
$L(x)$, define $x\cdot y$ by $L(x)y$. Then the identities 
\eqref{post5},\eqref{post6},\eqref{post7} follow as above.
\end{proof}

We obtain the following result.

\begin{prop}\label{subalgebra}
There is a 1-1 correspondence between the post-Lie algebra structures on $(\Lg,\Ln)$ 
and the subalgebras $\Lh$ of $\Ln \rtimes \Der(\Ln)$ for which the projection 
$p_1\colon\Ln \rtimes \Der(\Ln) \rightarrow \Ln$ onto the first factor
induces a Lie algebra isomorphism of $\Lh$ onto $\Lg$.
\end{prop}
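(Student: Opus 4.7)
The plan is to reduce everything to the already-established Proposition \ref{inj hom}, since that result is essentially the same correspondence phrased in terms of injective homomorphisms rather than subalgebras; passing between an injective homomorphism and its image should do almost all the work.

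First I would handle the forward direction. Starting from a post-Lie algebra structure $x\cdot y$ on $(\Lg,\Ln)$, Proposition \ref{inj hom} produces the injective Lie algebra homomorphism $\phi\colon\Lg\to\Ln\rtimes\Der(\Ln)$, $x\mapsto(x,L(x))$. Let $\Lh:=\phi(\Lg)=\{(x,L(x))\mid x\in V\}$. Being the image of a Lie algebra homomorphism, $\Lh$ is a subalgebra, and $p_1|_\Lh\colon\Lh\to\Lg$ sends $(x,L(x))\mapsto x$, so it is a two-sided inverse to $\phi$ viewed as a map $\Lg\to\Lh$; in particular it is a Lie algebra isomorphism.

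Conversely, suppose $\Lh\subseteq\Ln\rtimes\Der(\Ln)$ is a subalgebra such that $p_1|_\Lh\colon\Lh\to\Lg$ is a Lie algebra isomorphism. Let $\psi:=(p_1|_\Lh)^{-1}\colon\Lg\to\Lh\subseteq\Ln\rtimes\Der(\Ln)$; this is a Lie algebra homomorphism that is the identity on the first coordinate, so it has the form $\psi(x)=(x,D(x))$ for a uniquely determined map $D\colon V\to\Der(\Ln)$. Since $\psi$ is linear and the identification $\Ln\rtimes\Der(\Ln)\cong\Ln\oplus\Der(\Ln)$ is linear, $D$ is linear. Defining $x\cdot y:=D(x)(y)$ yields a bilinear product on $V$ for which each left multiplication $L(x)=D(x)$ is a derivation of $\Ln$, so by the converse part of Proposition \ref{inj hom} (applied to the embedding $\psi$) the product $x\cdot y$ is a post-Lie algebra structure on $(\Lg,\Ln)$.

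Finally I would verify that these two assignments are mutually inverse. Starting from $x\cdot y$, passing to $\Lh=\phi(\Lg)$ and back gives the linear map $D(x)=L(x)$ by construction, hence recovers the same product. Starting from $\Lh$, passing to the post-Lie structure $x\cdot y=D(x)(y)$ and then to its image subalgebra gives $\{(x,D(x))\mid x\in V\}=\psi(\Lg)=\Lh$. The only point requiring even mild care is the observation that the second coordinate of $\psi(x)$ is automatically a derivation of $\Ln$ (since $\psi$ lands in $\Ln\rtimes\Der(\Ln)$) and depends linearly on $x$; aside from that, every step is a direct translation between an injective homomorphism and its image, so no substantial obstacle is expected.
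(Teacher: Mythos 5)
Your proposal is correct and follows essentially the same route as the paper: both directions are reduced to Proposition~\ref{inj hom}, passing from a post-Lie structure to the image subalgebra $\{(x,L(x))\mid x\in V\}$ and, conversely, from $\Lh$ to the embedding $(p_1|_{\Lh})^{-1}$, which is the identity on the first factor and hence yields the product via its second coordinate. Your extra remarks (linearity of $D$ and the explicit check that the two assignments are mutually inverse) only make explicit what the paper leaves as obvious.
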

Note that as vector spaces $\Ln=V=\Lg$, so that $p_1$ can indeed be seen as a map onto $\Lg$.
\begin{proof}
Assume that there exists a post-Lie algebra structure on $(\Lg,\Ln)$, and denote by $\phi$ the
corresponding embedding as above. Then $\Lh=\im \phi=\{(x,L(x))\mid x\in \Lg\}$ 
is the Lie subalgebra corresponding to $\Lg$. It is obviously a subalgebra of $\Ln \rtimes \Der(\Ln)$ 
and $\phi$ induces an isomorphism of $\Lg$ onto $\Lh$. It is clear that the restriction of 
$p_1$ to $\Lh$ is the inverse of this isomorphism, and so is itself an isomorphism. 

Conversely, let $\Lh$ be a subalgebra of $\Ln \rtimes \Der(\Ln)$, for which ${p_1}_{\mid \Lh}:\Lh \rightarrow \Lg$ 
is an isomorphism. Then the inverse map 
\[ \phi = ({p_1}_{\mid \Lh})^{-1}:\Lg \rightarrow \Lh\le \Ln \rtimes \Der(\Ln)\]
is an embedding inducing the identity on the first factor. Hence, by proposition~\ref{inj hom}, $\phi$ determines 
a post-Lie algebra structure on $(\Lg, \Ln)$. 

In the above, we showed how to assign a subalgebra $\Lh$ to a post-Lie algebra structure and vice versa. It is obvious that 
these two operations are each others inverse.
\end{proof}

\begin{rem}\label{rem subalgebra}
Given a Lie algebra $\Ln$, let $\Lh$ be any subalgebra of $\Ln\rtimes \Der(\Ln)$ for which the projection $p_1$ onto the first factor 
is a bijection. Then, for any $x\in \Ln$, there is exactly one $L(x)\in \Der(\Ln)$ such that $(x,L(x))\in \Lh$.
We can define a new Lie bracket on $\Ln$ by 
$$[x,y]:= p_1([(x,L(x)),(y,L(y))])$$
and denote the corresponding Lie algebra by $\Lg$.
Now, $\phi:\Lg\rightarrow \Lh, \; x \mapsto (x,L(x))$ is an isomorphism of Lie algebras, 
and $x\cdot y := L(x)y$ is the post-Lie algebra structure on $(\Lg,\Ln)$ corresponding to $\Lh$.
\end{rem}

In the special case where $\Ln$ is semisimple we can say more on the above 1-1 correspondence.
Then $\Der(\Ln)=\ad (\Ln)=\Ln$, and the Lie algebra $\Ln  \rtimes \Der(\Ln)=\Ln \rtimes \Ln$
is isomorphic to the direct sum $\Ln\oplus \Ln$. Indeed, the map $\psi\colon \Ln \rtimes \Ln
\ra \Ln\oplus \Ln$, $(x,y)\mapsto (x+y,y)$ is a Lie algebra isomorphism.

\begin{prop}\label{semisimple2}
Let $\Ln$ be a semisimple Lie algebra. Then there is a 1-1 correspondence 
between the post-Lie algebra structures on $(\Lg,\Ln)$ and the 
subalgebras $\Lh$ of $\Ln \oplus \Ln$ for which the map $p_1-p_2 \colon \Ln\oplus\Ln\rightarrow
\Ln : (x,y)\mapsto x-y$ induces an isomorphism of $\Lh$ onto $\Lg$. 
Here $p_i:\Ln\oplus \Ln\rightarrow \Ln$ denotes projection onto the $i$-th factor ($i=1,2$).
\end{prop}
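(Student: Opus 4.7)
The plan is to reduce this statement to Proposition~\ref{subalgebra} by transporting subalgebras through the isomorphism $\psi\colon \Ln\rtimes\Ln \to \Ln\oplus\Ln$, $(x,y)\mapsto(x+y,y)$, which was introduced just before the statement. Since $\Ln$ is semisimple, $\Der(\Ln)=\ad(\Ln)\cong\Ln$, so $\Ln\rtimes\Der(\Ln)$ may be identified with $\Ln\rtimes\Ln$, and $\psi$ is a Lie algebra isomorphism between $\Ln\rtimes\Ln$ and $\Ln\oplus\Ln$.

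First, I would apply Proposition~\ref{subalgebra} to identify post-Lie algebra structures on $(\Lg,\Ln)$ with subalgebras $\Lh \le \Ln\rtimes\Der(\Ln)$ such that the projection $p_1\colon \Ln\rtimes\Der(\Ln)\to \Ln$ onto the first factor restricts to a Lie algebra isomorphism $\Lh \to \Lg$. Next, I would note that the image $\psi(\Lh)$ runs through all subalgebras of $\Ln\oplus\Ln$ as $\Lh$ runs through the subalgebras of $\Ln\rtimes\Ln$, and that this passage is a bijection on the set of subalgebras.

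The key computation is to track what the projection $p_1$ on $\Ln\rtimes\Ln$ becomes on the $\Ln\oplus\Ln$ side. Since $\psi^{-1}(a,b)=(a-b,b)$, for any $(a,b)\in\Ln\oplus\Ln$ one has
\[
p_1\bigl(\psi^{-1}(a,b)\bigr) = a-b = (p_1-p_2)(a,b),
\]
so under the bijection $\Lh\mapsto\psi(\Lh)$ the condition ``$p_1|_\Lh$ is a Lie isomorphism onto $\Lg$'' translates exactly into ``$(p_1-p_2)|_{\psi(\Lh)}$ is a Lie isomorphism onto $\Lg$''. Combining this with Proposition~\ref{subalgebra} then gives the asserted 1-1 correspondence.

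There is no real obstacle here: the content is just the verification that the first-factor projection on $\Ln\rtimes\Ln$ pulls back to the difference map $p_1-p_2$ on $\Ln\oplus\Ln$; everything else follows formally from Proposition~\ref{subalgebra} and the isomorphism $\psi$.
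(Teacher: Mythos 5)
Your proposal is correct and follows essentially the same route as the paper: reduce to Proposition~\ref{subalgebra} and transport subalgebras through the isomorphism $\psi\colon \Ln\rtimes\Ln\to\Ln\oplus\Ln$, observing that the first-factor projection pulls back to $p_1-p_2$ since $p_1\circ\psi^{-1}=p_1-p_2$. The paper's proof states exactly this (with a diagram) and you have merely made the computation $\psi^{-1}(a,b)=(a-b,b)$ explicit.
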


\begin{proof}
This follows from proposition $\ref{subalgebra}$ by noting that a subalgebra $\Lh$ of
$\Ln\rtimes \Ln$ for which $p_1$ induces an isomorphism of $\Lh$ on $\Lg$ corresponds via $\psi$ to
a subalgebra $\Lh'=\psi(\Lh)$ of $\Ln\oplus \Ln$ such that $p_1-p_2\colon \Ln\oplus \Ln \ra \Ln$,
$(x,y)\mapsto x-y$ induces an isomorphism of $\Lh'$ on $\Lg$. This is visualized by the
following diagram:
\[ 
\xymatrix{
\Lh\leq \Ln\rtimes\Ln \ar[r]^{ \ \ \ p_1}\ar@<1ex>[d]^{\psi}  &\ \ \  \Ln 
\\ \Lh'\leq\Ln\oplus\Ln \ar@<1ex>[u]^{\psi^{-1}} \ar@{-->}[ur]_{p_1-p_2}. & \\ } 
\]
\end{proof}

We conclude this section by showing how post-Lie algebra structures arise naturally
in the study of NIL-affine actions of Lie groups. We say here that a post-Lie algebra structure 
is {\it complete} if all left multiplications are nilpotent.

\begin{thm}
Let $G$ and $N$ be connected, simply connected nilpotent Lie groups with associated Lie algebras
$\Lg$ and $\Ln$. Then there exists a simply transitive NIL-affine action of $G$ on $N$ if and only if
there is a Lie algebra $\Lg'\simeq \Lg$, with the same underlying vector space as $\Ln$,
such that the pair of Lie algebras $(\Lg',\Ln)$ 
admits a complete post-Lie algebra structure.
\end{thm}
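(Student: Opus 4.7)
The plan is to transfer everything to the Lie algebra level and then invoke Proposition~\ref{subalgebra}. The key geometric input is that, since $N$ is simply connected and nilpotent, $\Aut(N)$ has Lie algebra $\Der(\Ln)$, so $\Aff(N)=N\rtimes\Aut(N)$ has Lie algebra $\Ln\rtimes\Der(\Ln)$; thus every smooth action $\rho\colon G\ra \Aff(N)$ differentiates to a Lie algebra homomorphism $d\rho\colon\Lg\ra\Ln\rtimes\Der(\Ln)$, which I will write as $d\rho(x)=(T(x),L(x))$ with $T\colon\Lg\ra\Ln$ linear and $L\colon\Lg\ra\Der(\Ln)$. Moreover, writing $\rho(g)=(m(g),\alpha(g))$, the NIL-affine action sends $1_N$ to $m(g)$, so the orbit map at $1_N$ is exactly $m\colon G\ra N$ and its differential at $1_G$ is $T$.

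For the forward direction, suppose $\rho$ is simply transitive. Then $m$ is a diffeomorphism, forcing $T$ to be a linear isomorphism. Transporting the bracket of $\Lg$ to the underlying vector space $V$ of $\Ln$ along $T$ produces a Lie algebra $\Lg'\simeq \Lg$ on $V$, and $y\mapsto (y,L(T^{-1}y))$ embeds $\Lg'$ into $\Ln\rtimes\Der(\Ln)$ in such a way that $p_1$ restricts to the identity on its image. Proposition~\ref{subalgebra} then yields a post-Lie algebra structure on $(\Lg',\Ln)$. Completeness is verified by arguing that a free and transitive action of a connected nilpotent Lie group on the simply connected nilpotent $N$ forces the image of $\rho(G)$ in $\Aut(N)$ to be unipotent, so every $L(x)$ is a nilpotent derivation.

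Conversely, starting from a complete post-Lie algebra structure on $(\Lg',\Ln)$ with $\Lg'\simeq \Lg$, Proposition~\ref{inj hom} gives an injective Lie algebra homomorphism $\phi\colon\Lg'\ra\Ln\rtimes\Der(\Ln)$, $\phi(x)=(x,L(x))$. Because each $L(x)$ is a nilpotent derivation, $\exp L(x)\in\Aut(\Ln)\cong\Aut(N)$ is well defined, and since $G$ is simply connected $\phi$ integrates to a Lie group homomorphism $\rho\colon G\ra \Aff(N)$. The orbit map of $\rho$ at $1_N$ has differential at $1_G$ equal to the identity on $V$ under the identifications $\Lg\simeq\Lg'=V=\Ln\simeq T_{1_N}N$, so it is a local diffeomorphism at $1_G$; one then upgrades this to a global diffeomorphism using nilpotence on both sides, concluding that the action is simply transitive.

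The main technical obstacle in either direction is tying completeness exactly to simple transitivity. In the forward direction one must show that $L(\Lg)\subseteq\Der(\Ln)$ consists of nilpotent operators whenever $\rho$ is simply transitive; this is handled by studying the closed subgroup $\rho(G)\le \Aff(N)$ and observing that any non-nilpotent component of its image in $\Aut(N)$ would destroy bijectivity of the orbit map (as can already be seen in the one-dimensional toy model). In the converse, one has to promote the local diffeomorphism property at $1_G$ to a global one; a natural strategy is to identify $N$ with $\Ln$ via $\exp$ and show that, thanks to nilpotence of each $L(x)$ and of $\Ln$ itself, the orbit map becomes a polynomial map with everywhere-invertible Jacobian, hence a polynomial diffeomorphism onto $N$.
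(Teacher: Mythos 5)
Your overall strategy (differentiate $\rho$ to $d\rho\colon\Lg\ra\Ln\rtimes\Der(\Ln)$, note that simple transitivity makes the translational part $T$ bijective, transport the bracket along $T$ and invoke Proposition~\ref{inj hom}/Proposition~\ref{subalgebra}, and conversely integrate $\phi(x)=(x,L(x))$ using simple connectedness of $G$) is exactly the algebraic skeleton of the paper's proof. But the paper deliberately does not reprove the geometric core: it quotes Theorem 3.1 of \cite{BDD}, which says that $\rho$ is simply transitive if and only if $d\rho$ is a \emph{complete} NIL-affine structure, i.e.\ $t$ is bijective \emph{and} every $D(x)$ is nilpotent. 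Those two equivalences are precisely the places where your argument has genuine gaps rather than proofs.

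In the forward direction, your justification that simple transitivity forces the image of $\rho(G)$ in $\Aut(N)$ to be unipotent (equivalently, every $L(x)$ nilpotent) is only an assertion supported by a ``one-dimensional toy model''. This is a substantive theorem, already nontrivial in the classical case $N=\R^n$ (simply transitive affine actions of nilpotent groups have unipotent linear part), and it is exactly the content of the cited result; it cannot be obtained by saying that a non-nilpotent component ``would destroy bijectivity of the orbit map''. In the converse direction, your upgrade from local to global diffeomorphism rests on the principle that a polynomial map with everywhere-invertible Jacobian is a polynomial diffeomorphism; that principle is false over $\R$ (Pinchuk's counterexample to the real Jacobian conjecture) and open over $\C$, so it cannot carry the argument. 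A correct self-contained proof must exploit the unipotent/triangular structure coming from nilpotency of $\Ln$ and of the operators $L(x)$ (e.g.\ a filtration or degree argument showing the orbit map has a polynomial inverse), or simply cite \cite[Theorem 3.1]{BDD} as the paper does. As written, both implications of the theorem remain unproved at their crucial step.
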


\begin{proof}
Let $G$ and $N$ be connected, simply connected nilpotent Lie groups with corresponding Lie algebras 
respectively $\Lg$ and $\Ln$. Let $\rho:G\rightarrow \Aff(N)$ be a representation with corresponding 
differential $d\rho:\Lg\rightarrow \Ln\rtimes\Der (\Ln): x\mapsto (t(x),D(x))$. Recall that 
$\La\Lf\Lf(\Ln)=\Ln\rtimes\Der (\Ln)$ is the Lie algebra of the Lie group $\Aff(N)$. Then $\rho$ 
induces a simply transitive NIL-affine action of $G$ on $N$ if and only if $d\rho$ is a complete 
NIL-affine structure on $\Lg$ (\cite[Theorem 3.1]{BDD}). This means that $d\rho$ is a Lie algebra homomorphism such that 
$t:\Lg\rightarrow \Ln:x\mapsto t(x)$ is bijective and such that $D(x)$ is nilpotent for all $x\in \Lg$.

Now suppose we have a complete post-Lie algebra structure on a pair of Lie algebras $(\Lg',\Ln)$ where $\Lg'$ is isomorphic
to $\Lg$, say via $\psi:\Lg\rightarrow \Lg'$. Hence, by 
proposition \ref{inj hom}, we have that
\[
\phi\colon \Lg' \ra \Ln\rtimes \Der(\Ln),\; x\mapsto (x,L(x))
\]
is an injective Lie algebra homomorphism such that all $L(x)$ are nilpotent. The composition 
$\phi \circ \psi:\Lg \rightarrow \Ln\rtimes \Der(\Ln)$ is then clearly a complete NIL-affine structure on $\Lg$.

For the converse statement suppose that $d\rho:\Lg\rightarrow \Ln\rtimes\Der (\Ln): 
x\mapsto (t(x),D(x))$ is a complete NIL-affine structure on $\Lg$. 
Then $\Lh=d\rho(\Lg)$ is a Lie subalgebra of $\Ln\rtimes\Der (\Ln)$ 
for which the projection on the first factor induces a bijection of $\Lh$ on $\Ln$ since 
$t$ is bijective. By remark \ref{rem subalgebra} this gives rise to a post-Lie algebra structure on 
$(\Lg',\Ln)$, where the Lie bracket on $\Lg'$ is given by 
\[
(x,y)= p_1([(x,D(t^{-1}(x))),(y,D(t^{-1}(y)))]),
\] 
and $p_1$ is the projection on the first factor. The left multiplications are given by 
$L(x)=D(t^{-1}(x))$, so these are all nilpotent and hence the post-Lie structure is complete. 
Note that $\Lg$ is isomorphic to $\Lh$ which is in its turn isomorphic to $\Lg'$. 
\end{proof}

\section{Special cases and examples}

Before studying more structural results concerning post-Lie algebras in the next section, it might be
useful to present some obvious examples. 

\begin{prop} Suppose $(\Lg,\Ln)$ is a pair of Lie algebras and 
let $\la\not\in \{0,1 \}$. Then $x\cdot y=\la [x,y]$ defines a post-Lie algebra structure 
on $(\Lg,\Ln)$ if and only if $\{x,y\}=(1-2\la)[x,y]$, and both
$\Lg$ and $\Ln$ are nilpotent of class at most 2. 
\end{prop}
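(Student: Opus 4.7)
The plan is to treat each implication by direct substitution of $x\cdot y=\la[x,y]$ into the three axioms \eqref{post5}--\eqref{post7}, and to extract from them, in turn, the formula for $\{\,,\}$, the 2-step nilpotency of $\Lg$, and the 2-step nilpotency of $\Ln$.

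For the forward direction, I would first plug $x\cdot y=\la[x,y]$ into \eqref{post5}: the left-hand side becomes $2\la[x,y]$, forcing $\{x,y\}=(1-2\la)[x,y]$ identically. Next I would substitute into \eqref{post6}. The left-hand side reads $\la[[x,y],z]$, while the right-hand side expands as
\[
\la^{2}\bigl([x,[y,z]]-[y,[x,z]]\bigr)=\la^{2}[[x,y],z],
\]
using the Jacobi identity in $\Lg$. Comparing, the difference is $\la(1-\la)[[x,y],z]=0$, and since $\la\notin\{0,1\}$ we conclude $[[x,y],z]=0$ for all $x,y,z\in V$, i.e.\ $\Lg$ is nilpotent of class $\le 2$. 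Because $\{x,y\}=(1-2\la)[x,y]$ is a scalar multiple of $[\,,]$, the Lie algebra $\Ln$ inherits this 2-step nilpotency (trivially if $\la=1/2$, in which case $\Ln$ is abelian).

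For the converse, assume $\{x,y\}=(1-2\la)[x,y]$ and that $\Lg$ (equivalently $\Ln$) is nilpotent of class at most 2. Then \eqref{post5} is immediate from the computation above. For \eqref{post6}, both sides reduce to $\la[[x,y],z]$ after using Jacobi and the fact that $\la^2[[x,y],z]=\la[[x,y],z]$ is automatic once $[[x,y],z]=0$. Finally, \eqref{post7} is in fact a formal consequence of \eqref{post5} together with Jacobi in $\Lg$: substituting $x\cdot y=\la[x,y]$ and $\{y,z\}=(1-2\la)[y,z]$, both sides reduce to $\la(1-2\la)[x,[y,z]]$ after invoking
\[
[[x,y],z]+[y,[x,z]]=[x,[y,z]],
\]
which is just the Jacobi identity rearranged. (In particular, under 2-step nilpotency both sides simply vanish.)

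No step here is genuinely hard; the computations are short. The only point that requires some care is the use of the hypothesis $\la\notin\{0,1\}$: it is exactly what allows one to cancel the factor $\la(1-\la)$ in the analysis of \eqref{post6} and thereby force the 2-step nilpotency. Without this hypothesis one would recover the two trivial cases: $\la=0$ (the zero product, treated in Example \ref{zero}) and $\la=1$ (where $\{x,y\}=-[x,y]$ and no nilpotency is needed).
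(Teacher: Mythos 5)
Your proof is correct and follows essentially the same route as the paper: derive $\{x,y\}=(1-2\la)[x,y]$ from \eqref{post5}, force $[[x,y],z]=0$ from \eqref{post6} using the Jacobi identity and $\la\notin\{0,1\}$, and verify \eqref{post7} in the converse by reducing both sides to $\la(1-2\la)[x,[y,z]]$ via Jacobi. The only tiny slip is the parenthetical claim that the nilpotency of $\Lg$ and of $\Ln$ are equivalent, which fails for $\la=\tfrac12$ (where $\Ln$ is abelian regardless), but this does not affect the argument since both hypotheses are given.
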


\begin{proof}
Suppose that $x\cdot y=\la [x,y]$ defines a  post-Lie algebra structure on $(\Lg,\Ln)$. Then
\eqref{post5} implies $\{x,y\}=(1-2\la)[x,y]$. By \eqref{post6} and the Jacobi identity for $\Lg$ 
we obtain
\begin{align*}
\la [[x,y],z] & = [x,y]\cdot z \\
              & = x\cdot (y\cdot z)-y\cdot (x\cdot z) \\
              & = \la^2[x,[y,z]]-\la^2[y,[x,z]] \\
              & = \la^2[[x,y],z].
\end{align*}
Because $\la\neq 0,1$ this yields $[[x,y],z]=\{\{x,y\},z\}=(x\cdot y)\cdot z=0$. \\
Conversely, let $(\Lg,\Ln)$ be a pair of nilpotent Lie algebras of class $\leq 2$ with
$\{x,y\}=(1-2\la)[x,y]$, and $x\cdot y=\la [x,y]$.  Obviously, the identities \eqref{post5}, \eqref{post6} are satisfied.
To show \eqref{post7}, we use that $\{x,y\}=\mu \, x\cdot y$ with $\mu=\frac{1-2\la}{\la}$:
\begin{align*}
x\cdot \{y,z\} & = \mu\, x\cdot (y\cdot z)\\
               & = \mu\la^2 [x,[y,z]] \\
               & = \mu\la^2 [[x,y],z]+\mu\la^2[y,[x,z]]\\
               & = \mu\, (x\cdot y)\cdot z+ \mu\, y\cdot (x\cdot z)\\
               & = \{x\cdot y,z\}+\{y,x\cdot z\}.
\end{align*}
\end{proof}

\medskip

Note that for $\la=\frac{1}{2}$ we have  
$x\cdot y=\frac{1}{2}[x,y]$ and $\{x,y\}=0$. Hence $\Ln$ is abelian, and the product defines
a pre-Lie algebra structure (even a Novikov structure) on $\Lg$ (still assuming $\Lg$ is nilpotent 
of class $\leq2$), see example $\ref{lsa}$. 

\medskip

It is easy to discuss the cases $\la=0,1$ which we have excluded above.
For $\la=0$ we have the zero product $x\cdot y=0$ with $[x,y]=\{x,y\}$, see example
$\ref{zero}$. It gives a trivial post-Lie algebra structure on $(\Lg,\Lg)$ for any $\Lg$. 
For $\la=1$ we have $x\cdot y=[x,y]=-\{x,y\}$. This defines a post-Lie algebra
structure on $(\Lg,-\Lg)$ for any $\Lg$. 

\begin{rem}
We have an analogous result for post-Lie algebra structures defined by $x\cdot y=\mu \{x,y\}$.
For $\mu\not\in \{0,-1 \}$ this means that $[x,y]=(1+2\mu)\{x,y\}$, and both
$\Lg$ and $\Ln$ are nilpotent of class at most 2. In fact, we obtain the same post-Lie algebra structures as
above, except for the case $\mu=-\frac{1}{2}$, where $\Lg$ is abelian, and $x\cdot y=-\frac{1}{2}\{x,y\}$
defines an LR-structure on a nilpotent Lie algebra $\Ln$ of class $\leq 2$.
\end{rem}

Another special case arises if $\{x,y\}=\rho [x,y]$ for some nonzero scalar $\rho$.

\begin{ex}
Let $\rho\not\in \{0,1 \}$ and $\{x,y\}=\rho [x,y]$. Then $x\cdot y$ defines a post-Lie 
algebra structure on $(\Lg,\Ln)$ if and only if 
\begin{align*}
x\cdot y-y\cdot x & = (1-\rho)[x,y] \\
(1-\rho)(x\cdot (y\cdot z)-y\cdot (x\cdot z)) & = (x\cdot y)\cdot z- (y\cdot x)\cdot z \\
x\cdot (y\cdot z)-y\cdot (x\cdot z) & = (x\cdot y)\cdot z-z\cdot (x\cdot y)-
(x\cdot z)\cdot y+x\cdot(z\cdot y).
\end{align*} 
\end{ex}

This says that $x\cdot y$ is a certain deformed pre-Lie product on $\Lg$. In general, 
it seems difficult to classify such products. For semisimple Lie algebras however it is
possible, see \cite{BD2}. \\
There is also the interesting case $\rho=1$, i.e., $\{x,y\}=[x,y]$. 

\begin{ex}
Let $\{x,y\}=[x,y]$. Then $x\cdot y$ defines a post-Lie algebra structure on $(\Lg,\Ln)$ 
if and only if 
\begin{align*}
x\cdot y & = y\cdot x \\
[x,y]\cdot z & = x\cdot (y \cdot z)- y \cdot (x\cdot z) \\
x\cdot [y,z] & = [x\cdot y, z]+[y,x\cdot z].
\end{align*} 
\end{ex}

Hence $x\cdot y$ is a commutative product on $\Lg$ such that the operators
$L(x)$ are derivations, and $L([x,y])=[L(x),L(y)]$. For semisimple Lie algebras this
can be classified, see \cite{BD2}. In general, this seems to be difficult.
Already for the Heisenberg Lie algebra $\Ln_3(\C)$ there are many such structures:
let $(e_1,e_2,e_3)$ be a basis of $\C^3$ and define the non-zero Lie brackets of $\Lg$ and
$\Ln$ by $[e_1,e_2]=e_3$,  $\{e_1,e_2\}=e_3$.

\begin{ex}
Let $\Lg=\Ln=\Ln_3(\C)$ and $\al,\be,\ga\in \C$ with $\be \neq 0$. Then
\begin{align*}
e_1\cdot e_1 & = e_1-\be^{-1}e_2+\al e_3 \\
e_1\cdot e_2 = e_2 \cdot e_1 & = \be e_1-e_2+\frac{\ga+\al\be^2}{2\be}e_3\\
e_2\cdot e_2 & = \be^2e_1-\be e_2+\ga e_3
\end{align*}
defines a commutative post-Lie algebra structure on $(\Lg,\Ln)$, where we did not write down the 
zero products between basis vectors.
\end{ex}

In the following we want to classify all complex two-dimensional post-Lie algebras 
$(V,\cdot,\{\, ,\})$. Of course, two post-Lie algebras $(V,\cdot,\{\, ,\})$ and $(W,\cdot,\{\, ,\})$ 
are isomorphic if and only if there
exists a bijective linear map $\phi:V\rightarrow W$, which preserves both products:
\begin{align*}
\phi(x \cdot y) & = \phi(x) \cdot \phi(y), \\ 
\phi(\{x,y\})   & = \{\phi(x)  ,\phi(y) \}, 
\end{align*}
for all $x,y\in V$. It is obvious that isomorphic post-Lie algebras have 
isomorphic associated Lie algebras $\Lg$ and $\Ln$.

Now, if $(V,\cdot,\{\, ,\})$ is a two-dimensional complex post-Lie algebra, then the 
associated Lie algebras are either $\C^2$, or $\Lr_2(\C)$, the non-abelian Lie algebra of 
dimension $2$. In our classification, we distinguish between four cases, depending on the isomorphism 
types of these associated Lie algebras.  \\[0.2cm]
{\it Case 1}: $(\Lg,[\, ,])$ and $(\Ln,\{\, ,\})$ are abelian. \\[0.2cm]
There is a basis $(e_1,e_2)$ of $V$ such that $[e_1,e_2]=\{e_1,e_2\}=0$. Then \eqref{post5}
says that $x\cdot y=y\cdot x$ for all $x,y\in V$, \eqref{post6} says that
$x\cdot (y\cdot z)=y\cdot (x\cdot z)$ for all $x,y,z\in V$, and \eqref{post7} says $0=0$.
This implies
\[
x\cdot (z\cdot y)=x\cdot (y\cdot z)=y\cdot (x\cdot z)=(x\cdot z)\cdot y,
\]
so that post-Lie algebra structures on $(\C^2,\C^2)$ correspond to $2$-dimensional
commutative and associative algebras. The classification is well known, see for example
\cite{BU36}:
\vspace*{0.5cm}
\begin{center}
\begin{tabular}{c|c|c|c}
$V$ & Products & $[\, ,]$ & $\{\, ,\}$ \\
\hline
$V_1$ & $-$ & $[e_1,e_2]=0$ & $\{e_1,e_2\}=0$\\
\hline
$V_2$ & $e_1\cdot e_1=e_1$ & $[e_1,e_2]=0$ & $\{e_1,e_2\}=0$\\
\hline
$V_3$ & $e_1\cdot e_1=e_1,\; e_2\cdot e_2=e_2$ & $[e_1,e_2]=0$ & $\{e_1,e_2\}=0$\\
\hline
$V_4$ & $e_1\cdot e_2=e_1,\;e_2\cdot e_1=e_1$,  & $[e_1,e_2]=0$ & $\{e_1,e_2\}=0$\\
      & $e_2\cdot e_2=e_2$  & &\\
\hline
$V_5$ & $e_2\cdot e_2=e_1$  & $[e_1,e_2]=0$ & $\{e_1,e_2\}=0$ \\
\end{tabular}
\end{center}
\vspace*{0.5cm}
{\it Case 2}: $(\Lg,[\, ,])$ is abelian, and $(\Ln,\{\, ,\})$ is not abelian. \\[0.2cm]
We may choose a basis  $(e_1,e_2)$ of $V$ such that $[e_1,e_2]=0$ and
$\{e_1,e_2\}=-e_1$. Then post-Lie algebra structures on $(\C^2,\Lr_2(\C))$ are just
LR-structures on $\Ln$, which we have classified in \cite{BDD}:
\vspace*{0.5cm}
\begin{center}
\begin{tabular}{c|c|c|c}
$V$ & Products &  $[\, ,]$ & $\{\, ,\}$ \\
\hline
$V_6$ & $e_1\cdot e_1=e_1,\; e_2\cdot e_1=-e_1$ & $[e_1,e_2]=0$ & $\{e_1,e_2\}=-e_1$\\
\hline
$V_7$ & $e_1\cdot e_2=e_1$ & $[e_1,e_2]=0$ & $\{e_1,e_2\}=-e_1$\\
\hline
$V_8$ & $e_2\cdot e_1=-e_1$ & $[e_1,e_2]=0$ & $\{e_1,e_2\}=-e_1$\\
\end{tabular}
\end{center}
Note that $(V_8,\cdot)$ is also an LSA (left-symmetric algebra). \\[0.3cm]
{\it Case 3}: $(\Lg,[\, ,])$ is not abelian, and $(\Ln,\{\, ,\})$ is abelian. \\[0.2cm]
We may choose a basis  $(e_1,e_2)$ of $V$ such that $[e_1,e_2]=e_1$ and
$\{e_1,e_2\}=0$. Then post-Lie algebra structures on $(\Lr_2(\C),\C^2)$ are just
LSA-structures (pre-Lie algebra structures) on $\Lg$, which we have classified in \cite{BU36}:
\vspace*{0.5cm}
\begin{center}
\begin{tabular}{c|c|c|c}
$V$ & Products &  $[\, ,]$ & $\{\, ,\}$ \\
\hline
$V_9(\al)$ & $e_2\cdot e_1=-e_1,\; e_2\cdot e_2=\al e_2$  & $[e_1,e_2]=e_1$ & $\{e_1,e_2\}=0$\\
\hline
$V_{10}(\be)$ & $e_1\cdot e_2=\be e_1,\; e_2\cdot e_1=(\be-1)e_1$, & $[e_1,e_2]=e_1$ & $\{e_1,e_2\}=0$\\
$\be \neq 0$ & $e_2\cdot e_2=\be e_2$ &  \\
\hline
$V_{11}$  & $e_2\cdot e_1=-e_1,\; e_2\cdot e_2=e_1-e_2$ & $[e_1,e_2]=e_1$ & $\{e_1,e_2\}=0$\\
\hline
$V_{12}$  & $e_1\cdot e_1=e_2,\; e_2\cdot e_1=-e_1$ & $[e_1,e_2]=e_1$ & $\{e_1,e_2\}=0$\\
       & $e_2\cdot e_2=-2e_2$ & \\
\hline
$V_{13}$  & $e_1\cdot e_2=e_1,\; e_2\cdot e_2=e_1+e_2$ & $[e_1,e_2]=e_1$ & $\{e_1,e_2\}=0$\\
\end{tabular}
\end{center}
\vspace*{0.5cm}
Note that $(V_9(0),\cdot)$ is also a complete LR-algebra.  \\[0.3cm]
{\it Case 4}: $(\Lg,[\, ,])$ and $(\Ln,\{\, ,\})$ are not abelian. \\[0.2cm]
We may choose a basis $(e_1,e_2)$ of $V$ such that $[e_1,e_2]=\al_1e_1+\al_2e_2$ 
with $(\al_1,\al_2)\neq (0,0)$, and $\{e_1,e_2\}=e_1$. Note that we cannot make further 
assumptions on $\al_1$ or $\al_2$. On the other hand, the
conditions \eqref{post5}, \eqref{post6}, \eqref{post7} become very restrictive.
They immediately imply that $\al_2=0$, and hence $\al_1\neq 0$. 
We can easily list {\it all} possible products in this case, regardless of post-Lie algebra
isomorphism. We obtain two families of algebras, the first one given by
\[
e_2\cdot e_1  = (1-\al_1)e_1,\; e_2\cdot e_2=\al e_1, \\
\]
where $\al$ is an arbitrary complex number, and the second one given by
\[
e_1\cdot e_2 = -e_1,\; e_2\cdot e_1=-\al_1e_1,\; e_2\cdot e_2=\be e_1, \\
\]
where $\be$ is an arbitrary complex number. Let $\phi=(\phi_{ij})\in \End(V)$. It 
is an automorphism of $\Ln$ if and only if $\phi_{21}=0$, $\phi_{22}=1$ and $\det(\phi)
=\phi_{11}\neq 0$. Applying these automorphisms we obtain the classification of the above
products as post-Lie algebras:
\vspace*{0.5cm}
\begin{center}
\begin{tabular}{c|c|c|c}
$V$ & Products  &  $[\, ,]$ & $\{\, ,\}$ \\
\hline
$V_{14,\al_1},\, \al_1\neq 0$ & $e_2\cdot e_1=(1-\al_1)e_1$  & $[e_1,e_2]=\al_1e_1$ & $\{e_1,e_2\}=e_1$\\
\hline
$V_{15}$ & $e_2\cdot e_2=e_1$  & $[e_1,e_2]=e_1$ & $\{e_1,e_2\}=e_1$\\
\hline
$V_{16,\al_1},\, \al_1\neq 0$ & $e_1\cdot e_2=-e_1,\; e_2\cdot e_1=-\al_1e_1$ 
& $[e_1,e_2]=\al_1e_1$ & $\{e_1,e_2\}=e_1$ \\
\hline
$V_{17}$  & $e_1\cdot e_2=-e_1,\; e_2\cdot e_1=e_1$ & $[e_1,e_2]=-e_1$ & $\{e_1,e_2\}=e_1$\\
         & $e_2\cdot e_2=e_1$ & &\\
\end{tabular}
\end{center}
\vspace*{0.5cm}
The algebras $(V_{14,\al_1},\cdot)$ and $(V_{15},\cdot)$ are LR and LSA, but the algebras
$(V_{16,\al_1},\cdot)$ and $(V_{17},\cdot)$ are not. They satisfy a new identity, namely
\begin{align*}
x\cdot (y\cdot z) + y\cdot (x\cdot z) + z\cdot (x\cdot y) & =
(y\cdot z)\cdot x + (x\cdot z)\cdot y + (x\cdot y)\cdot z
\end{align*}
for all $x,y,z\in V$.

\section{Structure results for $\Lg$ and  $\Ln$}

The existence of post-Lie algebra structures on a pair of Lie algebras
$(\Lg,\Ln)$ imposes certain algebraic conditions on $\Lg$ and $\Ln$.
In particular, the algebraic structures of $\Lg$ and $\Ln$ depend on each other
in a certain way. We will show, for example, that if $\Lg$ is nilpotent and $(\Lg,\Ln)$ admits
a post-Lie algebra structure, then $\Ln$ must be solvable. But first we study the situation
in which $\Ln$ is 2--step nilpotent.

\begin{lem}\label{2step nilp}
Let $\Ln$ be a $2$-step nilpotent Lie algebra and $\Lm$ be the abelian
Lie algebra with the same underlying vector space as $\Ln$. Then
\begin{align*}
\psi \colon \Ln \rtimes \Der(\Ln) \ra  \Lm \rtimes \Der(\Lm)=\Lm \rtimes \Lg\Ll(\Lm),\\
(x,D)\mapsto \bigl(x,\frac{1}{2}\ad(x)+D\bigr)
\end{align*}
is an embedding of Lie algebras.
\end{lem}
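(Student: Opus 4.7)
The plan is to verify in order that $\psi$ is well-defined, linear, injective, and finally a Lie algebra homomorphism. Well-definedness is immediate: since $\Lm$ is abelian, $\Der(\Lm)=\End(\Lm)=\Lg\Ll(\Lm)$, so any linear endomorphism (in particular $\tfrac{1}{2}\ad(x)+D$) is automatically a derivation of $\Lm$. Linearity is clear from the definition, and injectivity is equally easy: if $\psi(x,D)=(0,0)$, then $x=0$ forces $\ad(x)=0$, whence $D=0$.

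The heart of the proof is checking that $\psi$ preserves brackets. Using the formula
\[
[(x,D),(x',D')] = (\{x,x'\}+D(x')-D'(x),\,[D,D'])
\]
(in $\Ln\rtimes\Der(\Ln)$, and in $\Lm\rtimes\Lg\Ll(\Lm)$ with the $\{\, ,\}$-term absent since $\Lm$ is abelian), I would compute $\psi([(x,D),(x',D')])$ and $[\psi(x,D),\psi(x',D')]$ separately and compare components. The first components coincide after noting that $\tfrac{1}{2}\ad(x)(x')-\tfrac{1}{2}\ad(x')(x)=\{x,x'\}$, which recovers exactly $\{x,x'\}+D(x')-D'(x)$.

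The second component is where the hypothesis on $\Ln$ enters. Expanding
\[
\bigl[\tfrac{1}{2}\ad(x)+D,\tfrac{1}{2}\ad(x')+D'\bigr]
 = \tfrac{1}{4}[\ad(x),\ad(x')]+\tfrac{1}{2}[\ad(x),D']+\tfrac{1}{2}[D,\ad(x')]+[D,D'],
\]
I would use the identity $[D,\ad(y)]=\ad(D(y))$, valid for any $D\in\Der(\Ln)$, together with $[\ad(x),\ad(x')]=\ad(\{x,x'\})$, to rewrite the right-hand side as
\[
\tfrac{1}{4}\ad(\{x,x'\})+\tfrac{1}{2}\ad\bigl(D(x')-D'(x)\bigr)+[D,D'].
\]
On the other hand, $\psi$ applied to the bracket gives
\[
\tfrac{1}{2}\ad(\{x,x'\})+\tfrac{1}{2}\ad\bigl(D(x')-D'(x)\bigr)+[D,D'].
\]
The two expressions agree iff $\tfrac{1}{4}\ad(\{x,x'\})=\tfrac{1}{2}\ad(\{x,x'\})$, i.e.\ iff $\ad(\{x,x'\})=0$.

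This is precisely where $2$-step nilpotency is used: since $[\Ln,\Ln]\subseteq Z(\Ln)$, the element $\{x,x'\}$ is central in $\Ln$, so $\ad(\{x,x'\})=0$ and both sides agree. This cancellation is the only real subtlety, and it explains the particular coefficient $\tfrac{1}{2}$ in the definition of $\psi$; any other coefficient would fail to produce a homomorphism unless $\Ln$ were already abelian.
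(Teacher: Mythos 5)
Your proof is correct and follows essentially the same route as the paper: a direct comparison of both components of $\psi([(x,D),(x',D')])$ and $[\psi(x,D),\psi(x',D')]$, using $[D,\ad(y)]=\ad(D(y))$ together with the fact that $2$-step nilpotency forces $\ad(\{x,x'\})=0$ (the paper drops the terms $\tfrac{1}{4}\ad(\{x,x'\})$ and $\tfrac{1}{2}\ad(\{x,x'\})$ silently, whereas you make the cancellation explicit). One small quibble with your closing aside: the coefficient $\tfrac{1}{2}$ is really forced by matching the \emph{first} components (one needs $2c\{x,x'\}=\{x,x'\}$ for a coefficient $c$), since once $\ad(\{x,x'\})=0$ the second components would agree for any $c$; this does not affect the validity of your argument.
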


\begin{proof}
The map is obviously an injective linear map. It remains to show that it is a Lie algebra 
homomorphism. Since  $\Ln$ is $2$-step nilpotent we have $\ad ([x,y])=0$.
Using the identity $[D,\ad(x)]=\ad(D(x))$ we obtain

\begin{eqnarray*}
\psi([(x_1,D_1),(x_2,D_2)])&=& \psi(([x_1,x_2]+D_1(x_2)-D_2(x_1),[D_1,D_2]))\\[0.1cm]
&=& \Big([x_1,x_2]+D_1(x_2)-D_2(x_1),\frac{1}{2}\ad(D_1(x_2))\\[0.1cm]
& & -\frac{1}{2}\ad(D_2(x_1))+[D_1,D_2]\Big).
\end{eqnarray*}

On the other hand we have

\begin{eqnarray*}
[\psi((x_1,D_1)),\psi((x_2,D_2))]&=& \Big[\big(x_1,\frac{1}{2}\ad (x_1)+D_1\big),
\big(x_2,\frac{1}{2}\ad (x_2)+D_2\big)\Big]\\[0.1cm]
&=& \Big(\frac{1}{2}\ad (x_1)(x_2)-\frac{1}{2}\ad (x_2)(x_1)+D_1(x_2)-D_2(x_1), \\
& & \Big[\frac{1}{2}\ad (x_1)+D_1,\frac{1}{2}\ad (x_2)+D_2\Big]\Big)\\[0.1cm]
&=& \Big([x_1,x_2]+D_1(x_2)-D_2(x_1),\frac{1}{2}[\ad (x_1),D_2]\\
& & +\frac{1}{2}[D_1,\ad (x_2)]+[D_1,D_2]\Big)\\[0.1cm]
&=& \Big([x_1,x_2]+D_1(x_2)-D_2(x_1),\frac{1}{2}\ad (D_1(x_2))\\
& & -\frac{1}{2}\ad (D_2(x_1))+[D_1,D_2]\Big).
\end{eqnarray*}
\end{proof}

\begin{prop}\label{prop4.2}
Suppose that there exists a post-Lie algebra structure on $(\Lg,\Ln)$, where $\Ln$ is 
$2$-step nilpotent. Then $\Lg$ admits a pre-Lie algebra structure. In particular, $\Lg$ is
not semisimple.
\end{prop}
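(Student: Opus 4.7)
The plan is to chain together the embeddings supplied by Proposition~\ref{inj hom} and Lemma~\ref{2step nilp} in order to transfer the post-Lie algebra structure on $(\Lg,\Ln)$ to a post-Lie algebra structure on $(\Lg,\Lm)$, where $\Lm$ is the abelian Lie algebra on the same underlying vector space $V$. By Example~\ref{lsa}, such a structure on $(\Lg,\Lm)$ is nothing but a pre-Lie algebra structure on $\Lg$.

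More concretely, I would proceed as follows. Given a post-Lie algebra structure $x\cdot y$ on $(\Lg,\Ln)$, Proposition~\ref{inj hom} supplies an injective Lie algebra homomorphism
\[
\phi\colon \Lg \longrightarrow \Ln\rtimes\Der(\Ln),\quad x\mapsto(x,L(x)).
\]
Since $\Ln$ is $2$-step nilpotent, Lemma~\ref{2step nilp} provides an embedding
\[
\psi\colon \Ln\rtimes\Der(\Ln) \longrightarrow \Lm\rtimes\Lg\Ll(\Lm),\quad (x,D)\mapsto\bigl(x,\tfrac{1}{2}\ad(x)+D\bigr),
\]
where $\Lm\rtimes\Lg\Ll(\Lm)=\Lm\rtimes\Der(\Lm)$ because $\Lm$ is abelian. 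The composition $\psi\circ\phi\colon \Lg\to\Lm\rtimes\Der(\Lm)$ is then an injective Lie algebra homomorphism sending $x$ to $\bigl(x,\tfrac{1}{2}\ad(x)+L(x)\bigr)$, so its image $\Lh$ is a Lie subalgebra of $\Lm\rtimes\Der(\Lm)$ on which the projection $p_1$ to the first factor is a linear bijection (it is the identity on $V$) and therefore a Lie algebra isomorphism $\Lh\to\Lg$.

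By Proposition~\ref{subalgebra}, applied now to the pair $(\Lg,\Lm)$, the subalgebra $\Lh$ corresponds to a post-Lie algebra structure on $(\Lg,\Lm)$. Since $\Lm$ is abelian, Example~\ref{lsa} identifies this with a pre-Lie algebra structure on $\Lg$, which is the first assertion. The second assertion is then an immediate consequence of the well-known fact (see \cite{BU24}) that a semisimple Lie algebra does not admit any pre-Lie algebra structure.

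The only mildly subtle step is verifying that $\psi\circ\phi$ really has the identity as its first component and that the induced Lie bracket on $V$ recovered from $\Lh$ agrees with $[\, ,]_\Lg$; both follow directly from the explicit formulas for $\phi$ and $\psi$. No further obstacle arises, since the nontrivial homomorphism property of $\psi$ has already been established in Lemma~\ref{2step nilp}, and the passage from pre-Lie structures to the non-semisimplicity of $\Lg$ is a citation.
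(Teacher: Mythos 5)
Your proposal is correct and follows essentially the same route as the paper: the paper's proof is exactly the combination of Proposition~\ref{inj hom} with Lemma~\ref{2step nilp} to obtain a post-Lie algebra structure on $(\Lg,\Lm)$, then Example~\ref{lsa} and the non-existence of pre-Lie structures on semisimple Lie algebras. You merely spell out the bookkeeping (the composition $\psi\circ\phi$ and the appeal to Proposition~\ref{subalgebra}) that the paper leaves implicit.
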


\begin{proof}
Proposition \ref{inj hom} and the above lemma imply that also $(\Lg,\Lm)$ admits a 
post-Lie algebra structure. Since $\Lm$ is abelian, $\Lg$ admits a pre-Lie algebra structure, 
see example $\ref{lsa}$. A complex semisimple Lie algebra does not admit a pre-Lie algebra
structure, see \cite{BU24}, \cite{BD2}.
\end{proof}

If we assume that $\Lg$ is nilpotent, we obtain the following result.

\begin{prop}\label{g nilpotent}
Suppose that there exists a post-Lie algebra structure on $(\Lg,\Ln)$, where $\Lg$ is 
nilpotent. Then $\Ln$ is solvable.
\end{prop}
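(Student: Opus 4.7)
The plan is proof by contradiction, reducing to the Levi-semisimple quotient of $\Ln$. Suppose $\Ln$ is not solvable; set $\Lr=\rad(\Ln)$ and $\Ls=\Ln/\Lr$, a nonzero semisimple Lie algebra. Since $\Lr$ is a characteristic ideal, every derivation of $\Ln$ descends to a derivation of $\Ls$, and the canonical projection together with this induced action assembles into a Lie algebra homomorphism
\[
\pi\colon \Ln\rtimes\Der(\Ln)\ra \Ls\rtimes\Der(\Ls),\quad (x,D)\mapsto (\bar x,\bar D).
\]

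Composing $\pi$ with the embedding $\phi\colon \Lg\hookrightarrow \Ln\rtimes\Der(\Ln)$, $x\mapsto (x,L(x))$ from Proposition~\ref{inj hom}, I obtain a Lie algebra homomorphism $\bar\phi=\pi\circ\phi\colon \Lg\to \Ls\rtimes\Der(\Ls)$. Because $\Ls$ is semisimple, $\Der(\Ls)=\ad(\Ls)\cong \Ls$, so writing $\bar L(x)=\ad(\ell(x))$ for the unique $\ell(x)\in \Ls$ and using the isomorphism $\psi\colon \Ls\rtimes\Ls\to \Ls\oplus\Ls$, $(a,b)\mapsto (a+b,b)$ introduced just before Proposition~\ref{semisimple2}, I identify the image
\[
\Lh':=\psi(\bar\phi(\Lg))=\{(\bar x+\ell(x),\ell(x)) : x\in V\}\subset \Ls\oplus\Ls.
\]
As a homomorphic image of the nilpotent $\Lg$, the subalgebra $\Lh'$ is nilpotent.

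A direct computation gives $(p_1-p_2)(\Lh')=\{\bar x:x\in V\}=\Ls$, since $V\to V/\Lr$ is surjective. Hence $\Ls\subseteq p_1(\Lh')+p_2(\Lh')$. Both $p_i(\Lh')$ are nilpotent subalgebras of $\Ls$, being images of $\Lh'$ under Lie algebra homomorphisms. Invoking the classical dimension bound that every nilpotent subalgebra of a semisimple Lie algebra lies inside the nilradical of some Borel subalgebra, and hence has dimension at most $(\dim\Ls-r)/2$ where $r$ denotes the rank of $\Ls$, I obtain
\[
\dim\Ls\le \dim p_1(\Lh')+\dim p_2(\Lh')\le \dim\Ls-r<\dim\Ls,
\]
since $r\ge 1$ for nonzero semisimple $\Ls$. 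This contradiction forces $\Ls=0$, so $\Ln$ is solvable.

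The main obstacle is the last step, which requires the sharp dimension bound for nilpotent subalgebras of semisimple Lie algebras (proved via Engel's theorem applied to the adjoint representation to embed any such subalgebra in the nilradical of a Borel). The weaker observation that a nonzero semisimple Lie algebra is not itself nilpotent would be insufficient to close the two-summand estimate in $\Ls\oplus \Ls$. A subsidiary routine check is that $\pi$ is well-defined as a Lie algebra homomorphism; this relies on $\rad(\Ln)$ being preserved by every derivation of $\Ln$, a standard fact in characteristic zero.
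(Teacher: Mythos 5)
Your reduction to the semisimple quotient is sound: the map $\pi\colon \Ln\rtimes\Der(\Ln)\ra\Ls\rtimes\Der(\Ls)$ is well defined because derivations preserve $\rad(\Ln)$, and after composing with $\phi$ and applying $\psi$ you correctly obtain a nilpotent subalgebra $\Lh'\le\Ls\oplus\Ls$ with $\Ls=p_1(\Lh')+p_2(\Lh')$, a sum of two nilpotent subalgebras of $\Ls$. The genuine gap is the final step. The lemma you invoke, that every nilpotent subalgebra of a semisimple Lie algebra lies inside the nilradical of some Borel subalgebra, is false: a Cartan subalgebra is abelian, hence nilpotent, but consists of semisimple elements and therefore meets the nilradical of any Borel only in $0$; already in $\Ls\Ll_2(\C)$ the span of $h$ is a counterexample. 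Your suggested justification via Engel's theorem applied to the adjoint representation fails for the same reason, since the elements of a nilpotent subalgebra of $\Ls$ need not be $\ad$-nilpotent in $\Ls$. The dimension bound itself, $\dim\Lm\le(\dim\Ls-\mathrm{rank}\,\Ls)/2$ for every nilpotent subalgebra $\Lm\le\Ls$, does happen to be true, but it is not the containment statement you quote and it requires a genuinely different argument (for instance via Jordan decompositions: the semisimple parts of elements of $\Lm$ span a toral subalgebra centralizing $\Lm$, one passes to the corresponding Levi subalgebra, and only the projection of $\Lm$ onto its semisimple part consists of nilpotent elements and hence lies in the nilradical of a Borel there, after which a root count finishes), or else a precise reference. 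As written, the proof rests on a false claim.

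The fix is close at hand, and it is exactly the key lemma of the paper's proof: by Goto's theorem \cite{GOT}, a Lie algebra which is the vector space sum of two nilpotent subalgebras is solvable. Applied to your identity $\Ls=p_1(\Lh')+p_2(\Lh')$ this makes the nonzero semisimple algebra $\Ls$ solvable, an immediate contradiction, so with this substitution your quotient argument becomes correct. The paper applies Goto more directly and avoids passing to $\Ln/\rad(\Ln)$ altogether: setting $\Lh=L(\Lg)$, one checks that $\Ln\rtimes\Lh=\phi(\Lg)+\Lh$ with both summands nilpotent, so $\Ln\rtimes\Lh$ is solvable and hence so is its subalgebra $\Ln$.
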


\begin{proof}
Consider the map $$ \phi\colon \Lg \ra \Ln\rtimes \Der(\Ln),\; x\mapsto (x,L(x))$$ 
induced by the post-Lie algebra structure. Then $\Lh=L(\Lg)$ is a nilpotent Lie algebra. 
We claim that $\Ln\rtimes \Lh=\phi(\Lg) \oplus \Lh$. Indeed, for  $(x,y)\in \Ln\rtimes \Lh$ we have
$$(x,y)-\phi(x)=(x,y)-(x,L(x))=(0,y-L(x)).$$
Hence $(x,y)=\phi(x)+(0,y-L(x))\in \phi(\Lg)\oplus \Lh$. Conversely, for $(x,y)\in \phi(\Lg)
\oplus \Lh$ there exist $a,b\in \Lg$ such that
$$(x,y)=\phi(a)+(0,L(b))=(a,L(a)+L(b))=(a,L(a+b))\in \Ln\rtimes \Lh.$$
It follows that $\Ln\rtimes \Lh$ is the direct sum of two 
nilpotent Lie algebras. Goto \cite{GOT} has shown that the sum of two nilpotent Lie algebras
is solvable. Hence $\Ln\rtimes \Lh$ is solvable, and so $\Ln$ itself is solvable.
\end{proof}

\begin{prop}\label{solv non nilp}
Suppose that there exists a post-Lie algebra structure on $(\Lg,\Ln)$, where $\Ln$ is
solvable and non-nilpotent. Then $\Lg$ is not perfect. 
\end{prop}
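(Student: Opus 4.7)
The plan is to argue by contradiction: assume $\Lg$ is perfect and deduce from \eqref{post5}, together with a structural fact about derivations of solvable Lie algebras, that $\Ln$ would have to be nilpotent.

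The engine of the argument is the classical observation, which I would verify as a preliminary step, that if $\Ln$ is any solvable Lie algebra over a field of characteristic zero, then every derivation $D\in\Der(\Ln)$ maps $\Ln$ into $\nil(\Ln)$. To see this I would form the semidirect product $K=\Ln\rtimes kD$, which is again solvable because $\Ln$ is a solvable ideal with one-dimensional abelian quotient. By Lie's theorem (applied over the algebraic closure and then descended to $k$), $[K,K]$ is a nilpotent ideal of $K$; since $[K,K]\subseteq\Ln$, it is in particular a nilpotent ideal of $\Ln$, and hence contained in $\nil(\Ln)$. As $D(\Ln)=[D,\Ln]\subseteq[K,K]$, this yields the claim.

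Next I would apply this fact to the operators $L(x)$ supplied by Lemma~\ref{2.5}, each of which is a derivation of $\Ln$. This gives $L(x)V\subseteq\nil(\Ln)$ for every $x\in V$. Combined with $\{y,z\}\in[\Ln,\Ln]\subseteq\nil(\Ln)$ (again by the nilpotency of the commutator of a solvable Lie algebra in characteristic zero), the axiom \eqref{post5}
\[
[x,y]=\{x,y\}+L(x)y-L(y)x
\]
forces $[\Lg,\Lg]\subseteq\nil(\Ln)$. If $\Lg$ is perfect this gives $V=\Lg=[\Lg,\Lg]\subseteq\nil(\Ln)\subseteq V$, so $\Ln=\nil(\Ln)$ is nilpotent, contradicting the hypothesis.

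The main obstacle is the derivation lemma: without the input that $D(\Ln)\subseteq\nil(\Ln)$ for $D\in\Der(\Ln)$, the identity \eqref{post5} does not immediately confine $[\Lg,\Lg]$ to $\nil(\Ln)$. Once that lemma is available, the remainder is a short, purely formal consequence of the post-Lie axioms, in the same spirit as the proof of Proposition \ref{g nilpotent}.
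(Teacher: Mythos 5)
Your argument is correct and follows essentially the same route as the paper: both rest on the fact that every derivation of a solvable Lie algebra maps it into its nilradical, so that $x\cdot y$, $y\cdot x$ and $\{x,y\}$ all lie in $\nil(\Ln)$, whence $[\Lg,\Lg]\subseteq\nil(\Ln)\varsubsetneq\Ln=\Lg$ via \eqref{post5}. The only difference is cosmetic: you prove the derivation lemma (via $\Ln\rtimes kD$ and Lie's theorem) and phrase the conclusion as a contradiction, whereas the paper simply quotes $D(\rad(\Ln))\subseteq\nil(\Ln)$ and states the result directly.
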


\begin{proof}
By assumption the nilradical  $\nil(\Ln)$ of $\Ln$ is different from $\Ln$. We have to
show that $\Lg\neq  [\Lg,\Lg]$.
For all $x\in \Ln$ the left multiplication $L(x)$ is a derivation of $\Ln$. 
Any derivation $D$ satisfies $D(\rad (\Ln))\subseteq \nil(\Ln)$. 
Since $\Ln$ is solvable we have $D(\Ln)\subseteq \nil(\Ln)$. In particular we have 
$\Ln\cdot \Ln\subseteq\nil(\Ln)$. It follows that for any $x,y\in \Lg$ we have
$$[x,y]=x\cdot y-y\cdot x+\{x,y\}\in \nil(\Ln).$$
This implies $[\Lg,\Lg]\subseteq \nil(\Ln) \varsubsetneq \Ln=\Lg$ as vector spaces, 
so that $\Lg\neq  [\Lg,\Lg]$.
\end{proof}

\begin{cor}\label{g-sl2}
Suppose that there exists a post-Lie algebra structure on $(\Lg,\Ln)$ with
$\Lg=\Ls\Ll_2(\C)$. Then $\Ln$ is isomorphic to $\Ls\Ll_2(\C)$.
\end{cor}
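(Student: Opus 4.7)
The plan is to prove this corollary by pure elimination among the three-dimensional complex Lie algebras, using Propositions~\ref{prop4.2} and~\ref{solv non nilp} to kill every possibility for $\Ln$ other than $\Ls\Ll_2(\C)$.

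First, since a post-Lie algebra structure on $(\Lg,\Ln)$ requires that $\Lg$ and $\Ln$ share a common underlying vector space $V$, we immediately get $\dim \Ln = \dim \Ls\Ll_2(\C) = 3$. I would then invoke the classical classification of three-dimensional complex Lie algebras: every such algebra is either isomorphic to $\Ls\Ll_2(\C)$ or is solvable, and among the solvable ones the nilpotent algebras (namely the abelian algebra $\C^3$ and the Heisenberg algebra $\Ln_3(\C)$) both have nilpotency class at most $2$, so they are $2$-step nilpotent in the sense of Lemma~\ref{2step nilp}.

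Next I would rule out the remaining cases one by one. If $\Ln$ were solvable but not nilpotent, then Proposition~\ref{solv non nilp} would force $\Lg$ to be non-perfect, contradicting $\Ls\Ll_2(\C)=[\Ls\Ll_2(\C),\Ls\Ll_2(\C)]$. If instead $\Ln$ were nilpotent, then by the observation above $\Ln$ is automatically $2$-step nilpotent, and Proposition~\ref{prop4.2} would produce a pre-Lie algebra structure on $\Lg$; but a complex semisimple Lie algebra admits no pre-Lie algebra structure (see \cite{BU24},\cite{BD2}), giving another contradiction. Hence $\Ln$ must be non-solvable, and the only three-dimensional complex non-solvable Lie algebra is $\Ls\Ll_2(\C)$, so $\Ln\simeq \Ls\Ll_2(\C)$.

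There is no genuinely hard step here; the whole argument is essentially a corollary of the preceding two propositions. The only mild obstacle is making sure that \emph{every} three-dimensional nilpotent Lie algebra falls within the hypothesis of Proposition~\ref{prop4.2}, which is why the small remark that a three-dimensional nilpotent Lie algebra is automatically of class at most $2$ is essential.
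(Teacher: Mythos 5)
Your argument is correct and is essentially the paper's own proof: eliminate the solvable non-nilpotent case via Proposition~\ref{solv non nilp} (perfectness of $\Ls\Ll_2(\C)$) and the nilpotent case via the class-$\le 2$ observation in dimension $3$ together with Proposition~\ref{prop4.2}, leaving only $\Ls\Ll_2(\C)$. The only cosmetic difference is that the paper handles the abelian subcase through Example~\ref{lsa} directly rather than folding it into Proposition~\ref{prop4.2}, which is immaterial since both yield a pre-Lie structure on the semisimple $\Lg$, a contradiction.
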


\begin{proof}
Suppose that $\Ln$ is nilpotent. Because of $\dim (\Ln)=3$, this would mean that 
it is $2$-step nilpotent or abelian. This is a contradiction to proposition $\ref{prop4.2}$ and
example $\ref{lsa}$. On the other hand, $\Ln$ cannot be solvable, non-nilpotent by 
proposition $\ref{solv non nilp}$. It follows that $\Ln$ is isomorphic to  $\Ls\Ll_2(\C)$.
\end{proof}

On a pair of simple Lie algebras $(\Lg,\Ln)$ only 
trivial post-Lie algebra structures are possible:

\begin{prop}\label{gn-simple}
Let $x\cdot y$ be a post-Lie algebra structure on $(\Lg,\Ln)$, where both $\Lg$ and
$\Ln$ are simple. Then either $x\cdot y=0$ for all $x$ and $y$ and $[x,y]=\{x,y\}$,
or $x\cdot y=[x,y]=-\{x,y\}$. 
\end{prop}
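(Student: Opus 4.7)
The plan is to analyse the representation $L\colon\Lg\to\Der(\Ln)$ of Lemma~\ref{2.5}. Since $\Ln$ is simple it is complete ($Z(\Ln)=0$ and $\Der(\Ln)=\ad(\Ln)$), so by the earlier lemma that gives a unique linear map $\phi\colon V\to V$ with $L(x)=\ad(\phi(x))$, the post-Lie product can be written as $x\cdot y=\{\phi(x),y\}$. Now $\ker L$ is an ideal of $\Lg$, hence equal to $\Lg$ or $0$ by simplicity. If $\ker L=\Lg$, then $\phi=0$, so $x\cdot y=0$, and \eqref{post5} gives $[x,y]=\{x,y\}$, which is the first alternative of the statement.

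Assume from now on that $\ker L=0$. Then $L$ is injective, and since $\dim\Lg=\dim\Ln=\dim\Der(\Ln)$ it is an isomorphism, so $\phi\colon V\to V$ is a linear bijection. The preceding proposition (characterising post-Lie structures of the form $x\cdot y=\{\phi(x),y\}$ when $Z(\Ln)=0$) further guarantees that $\phi\colon\Lg\to\Ln$ is a Lie algebra isomorphism, and identity~\eqref{post5} takes the form
\[
\{\phi(x),y\}+\{x,\phi(y)\}=[x,y]-\{x,y\}.
\]
Setting $\Psi:=\phi^{-1}$ and substituting $[x,y]=\Psi\{\phi(x),\phi(y)\}$ into this identity, a short direct calculation should yield
\[
\Psi(\{u,v\})=\{u,\Psi(v)\}+\{\Psi(u),v\}+\{\Psi(u),\Psi(v)\},
\]
from which one reads off that $\Phi:=\Psi+\id$ satisfies $\Phi(\{u,v\})=\{\Phi(u),\Phi(v)\}$; in other words, $\Phi$ is a Lie algebra endomorphism of $\Ln$.

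Because $\Ln$ is simple, $\ker\Phi$ is either $\Ln$ or $0$. If $\ker\Phi=\Ln$, then $\Phi=0$, so $\phi=-\id$, whence $x\cdot y=-\{x,y\}$, and \eqref{post5} forces $[x,y]=-\{x,y\}$; thus $x\cdot y=[x,y]=-\{x,y\}$, which is the second alternative. If instead $\Phi$ is injective, then by dimension it is a Lie algebra automorphism of $\Ln$, while at the same time $\Phi-\id=\Psi=\phi^{-1}$ is invertible, so that $\Ln^\Phi=\ker(\Phi-\id)=0$.

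The main obstacle is to rule out this last possibility, and to do so I will invoke the classical fact that every automorphism of a simple Lie algebra has a non-zero fixed vector. For an inner automorphism $\sigma=\Ad(g)$ this follows from $\dim Z_G(g)\ge 1$ in the adjoint group $G$, giving $\Ln^\sigma=\mathrm{Lie}(Z_G(g))\ne 0$; an outer automorphism reduces, after composition with an inner factor, to a diagram automorphism, whose fixed subalgebra is well-known to be non-trivial. The resulting contradiction eliminates the injective case and completes the proof.
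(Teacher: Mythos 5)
Your argument is correct and reaches the same crux as the paper, but by a different formal route. The paper invokes Proposition \ref{semisimple2}: the structure corresponds to a subalgebra $\Lh\le\Ln\oplus\Ln$ on which $p_1-p_2$ restricts to an isomorphism onto $\Lg$; simplicity forces either $p_2(\Lh)=0$ (zero product, $[x,y]=\{x,y\}$), or $p_1(\Lh)=0$ ($x\cdot y=[x,y]=-\{x,y\}$), or $\Lh$ is the graph of an automorphism of $\Ln$, which is excluded because by Jacobson's theorem \cite{JAC} the eigenvalue $1$ occurs, contradicting injectivity of $p_1-p_2$ on $\Lh$. You instead stay inside $\Ln$: you write $x\cdot y=\{\phi(x),y\}$ using completeness of the simple algebra $\Ln$, invoke the characterization of such products, and verify that $\Phi=\phi^{-1}+\id$ is a Lie algebra endomorphism of $\Ln$; your identity $\Psi(\{u,v\})=\{u,\Psi(v)\}+\{\Psi(u),v\}+\{\Psi(u),\Psi(v)\}$ and the ensuing case split (first on $\ker L$, then on $\ker\Phi$) are correct and recover exactly the paper's three cases --- in fact your $\Phi$ is the inverse of the graph automorphism appearing in the paper's Case 3, so the final obstruction is identical: an automorphism of a simple Lie algebra with no nonzero fixed vector. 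Your route is a self-contained computation avoiding the $\Ln\oplus\Ln$ correspondence; the paper's route is shorter because Propositions \ref{inj hom} and \ref{semisimple2} do the bookkeeping.

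The one genuinely weak point is your justification of the fixed-vector fact. The paper simply cites Jacobson \cite{JAC} (a Lie algebra over a field of characteristic $0$ admitting an automorphism without nonzero fixed vectors is nilpotent), and you should do the same: your reduction for outer automorphisms --- ``compose with an inner factor to reduce to a diagram automorphism'' --- does not work as stated, since the fixed subalgebra of $\sigma=\iota\circ\delta$ ($\iota$ inner, $\delta$ a diagram automorphism) is in general quite different from that of $\delta$, so nontriviality of the latter says nothing about the former. Moreover your argument via the adjoint group presupposes an algebraically closed field, whereas the proposition is stated over an arbitrary field of characteristic $0$; Jacobson's theorem applies directly there (and if you prefer to extend scalars, note that $\ker(\Phi-\id)$ has the same dimension over $\bar{k}$, but $\Ln\otimes\bar{k}$ is then only semisimple, so your ``simple'' version of the fixed-point statement would not even apply verbatim). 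With the citation in place of that sketch, your proof is complete.
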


\begin{proof}
By proposition $\ref{semisimple2}$, any such post-Lie algebra structure corresponds
to a subalgebra $\Lh$ of  $\Ln\oplus\Ln$ for which the map $p_1-p_2\colon \Ln\oplus \Ln\ra \Ln$
induces an isomorphism of $\Lh$ onto $\Lg$. Since $\Lg$ is simple, $\Lh$ is simple too.
Both projections maps $p_1$ and $p_2$ are Lie algebra homomorphisms onto $\Ln$. Hence the kernels
$\ker(p_1(\Lh))$ and $\ker(p_2(\Lh))$ are ideals in $\Lh$, and hence must be either $0$
or $\Lh$. This yields three possible cases: \\[0.2cm]
{\it Case 1: $p_2(\Lh)=0$}: Then we have $\Lh=\{(x,0)\mid x\in \Ln\}$. Because
$L(x)=\ad (0)=0 $ for all $x\in \Ln$ we have $x\cdot y=0$ and $[x,y]=\{x,y\}$ for all $x,y\in \Ln$.
In particular, $\Lg=\Ln$.\\[0.2cm]
{\it Case 2: $p_1(\Lh)=0$}: Then we have $\Lh=\{(0,x)\mid x\in \Ln\}$, and $L(x)=-\ad (x)$.
It follows that $[x,y]=-\{x,y\}$ for all $x,y\in \Ln$. Hence $\Lg=-\Ln$.\\[0.2cm]
{\it Case 3: $p_1(\Lh)\neq 0$ and $p_2(\Lh)\neq 0$}: Then we have $\ker({p_1}_{|\Lh})=\ker({p_2}_{|\Lh})=0$. 
Hence $p_1$ and $p_2$ are both bijective when restricted to $\Lh$.
This implies that there is a bijective linear map $\phi:\Ln\rightarrow\Ln$ such that 
$\Lh=\{(x,\phi(x))\mid x\in \Ln\}$. As $\Lh$ is a subalgebra of $\Ln\oplus\Ln$, we know that 
$[(x,\phi(x)),(y,\phi(y))]\in \Lh$ for all $x,y\in \Ln$. 
So $(\{x,y\},\{\phi(x),\phi(y)\})=(z,\phi(z))$ for some $z\in\Ln$. It follows that $z=\{x,y\}$ 
and hence 
$$\phi(\{x,y\})=\phi(z)=\{\phi(x),\phi(y)\}.$$
This shows that $\phi\in\Aut(\Ln)$. By a result of Jacobson \cite{JAC}, $\la=1$ must be an 
eigenvalue of $\phi$. But then $p_1-p_2: \Lh\rightarrow\Lg:(x,\phi(x))\mapsto x-\phi(x)$ 
cannot be an isomorphism. This is a contradiction. Hence this case cannot occur.
\end{proof}

The next result classifies the possible $3$-dimensional Lie algebras $\Lg$ for which
the pair $(\Lg,\Ln)$ with $\Ln=\Ls\Ll_2(\C)$ admits a post-Lie algebra structure.
We denote by $\Lr_{3,\la}(\C)$ the series of solvable, non-nilpotent Lie algebras
with basis $(e_1,e_2,e_3)$ and Lie brackets $[e_1,e_2]=e_2,\; [e_1,e_3]=\la e_3$.
Here $\la\in \C$ is a parameter. For $\la=0$ we obtain the decomposible Lie algebra
$\Lr_2(\C)\oplus \C$.

\begin{prop}\label{n-sl2}
Suppose that there exists a post-Lie algebra structure on $(\Lg,\Ln)$, where $\Ln$ is 
$\Ls\Ll_2(\C)$. Then $\Lg$ is isomorphic to $\Ls\Ll_2(\C)$ or to one of the 
Lie algebras $\Lr_{3,\la}(\C)$ for $\la\neq -1$. Moreover, all these possibilities do
occur.
\end{prop}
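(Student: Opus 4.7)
The plan is to apply Proposition~\ref{semisimple2} and reduce the question to classifying the $3$-dimensional subalgebras $\Lh$ of $\Ln\oplus\Ln=\Ls\Ll_2(\C)\oplus\Ls\Ll_2(\C)$ on which the map $p_1-p_2\colon(x,y)\mapsto x-y$ is a linear bijection onto $\Ln$; the Lie algebra $\Lg$ is then $\Lh$ itself, with its bracket transported along $p_1-p_2$. By Proposition~\ref{g nilpotent}, since $\Ls\Ll_2(\C)$ is not solvable, $\Lg$ cannot be nilpotent, so the only a priori candidates in dimension $3$ are $\Ls\Ll_2(\C)$, the family $\Lr_{3,\la}(\C)$, and the Jordan-type solvable algebra with brackets $[e_1,e_2]=e_2$, $[e_1,e_3]=e_2+e_3$.

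The heart of the proof is a classification of $3$-dimensional subalgebras of $\Ls\Ll_2(\C)\oplus\Ls\Ll_2(\C)$ via Goursat's lemma; recall that the proper subalgebras of $\Ls\Ll_2(\C)$ are lines and Borel subalgebras (each isomorphic to $\Lr_2(\C)$), and that the only proper non-zero ideal of a Borel is its nilradical. A short case analysis on the common dimension $d=\dim H_A/N_A=\dim H_B/N_B$ of the Goursat quotients isolates the surviving possibilities. For $d=0$ the subalgebra $\Lh$ is a direct sum $H_A\oplus N_B$ of total dimension $3$, yielding either $\Lh\cong \Ls\Ll_2(\C)$ or $\Lh\cong \Lr_2(\C)\oplus\C\cong \Lr_{3,0}(\C)$. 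For $d=1$, $H_A,H_B$ must both be Borels with $N_A,N_B$ their nilradicals; choosing representatives $H_A,E_A$ and $H_B,E_B$ with $[H_A,E_A]=E_A$ and $[H_B,E_B]=E_B$, the gluing isomorphism amounts to multiplication by some $\la\in\C^\ast$, and a direct bracket computation on the basis $X_1=(E_A,0)$, $X_2=(0,E_B)$, $X_3=(H_A,\la H_B)$ gives $[X_3,X_1]=X_1$, $[X_3,X_2]=\la X_2$, $[X_1,X_2]=0$, so $\Lh\cong \Lr_{3,\la}(\C)$. The case $d=3$ would force $\Lh$ to be the graph of an automorphism of $\Ls\Ll_2(\C)$, which is excluded by the Jacobson eigenvalue argument already used in Proposition~\ref{gn-simple}; all other dimension combinations are ruled out because $\Ls\Ll_2(\C)$ has no non-trivial proper ideals. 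Crucially, the Jordan-type solvable algebra cannot arise, since $\ad(X_3)$ always acts diagonalizably on $\langle X_1,X_2\rangle$.

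I would then determine when $p_1-p_2|_{\Lh}$ is bijective. The $d=0$ cases succeed by choosing the $1$-dimensional summand outside the Borel. In the $d=1$ case, if the two Borels $\Lb_A,\Lb_B$ coincide then $p_1-p_2$ sends $X_1$ and $X_2$ to parallel vectors, so the two Borels must differ, and up to conjugacy they can be taken opposite, e.g.\ $\Lb_A=\langle h,e\rangle$, $\Lb_B=\langle h,f\rangle$, with $H_A=h/2$, $H_B=-h/2$. Then the image of $X_3$ under $p_1-p_2$ is $(1+\la)h/2$, which is linearly independent from $E_A=e$ and $-E_B=-f$ precisely when $\la\neq -1$. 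For the converse, the explicit constructions $\Lh=\Ls\Ll_2(\C)\oplus 0$ and the Goursat construction just described with any $\la\neq -1$ realize, respectively, $\Lg\cong \Ls\Ll_2(\C)$ and $\Lg\cong \Lr_{3,\la}(\C)$, establishing that all the claimed possibilities occur.

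The principal obstacle I expect is keeping the Goursat casework organized and verifying carefully that the excluded value $\la=-1$ cannot be sidestepped by a cleverer choice of Borels or representatives $H_A,H_B$; this is the point that genuinely separates $\la=-1$ from every other value of the parameter.
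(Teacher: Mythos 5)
Your argument is correct, and it shares the paper's skeleton -- Proposition~\ref{g nilpotent} to exclude nilpotent $\Lg$, and Proposition~\ref{semisimple2} to translate everything into $3$-dimensional subalgebras $\Lh\le\Ls\Ll_2(\C)\oplus\Ls\Ll_2(\C)$ on which $p_1-p_2$ is bijective -- but the execution is genuinely different. The paper does not classify the subalgebras: for the exclusions it fixes the candidate Lie algebras $\Lr_3(\C)$ and $\Lr_{3,-1}(\C)$ and argues via the kernels of $p_1,p_2$ restricted to $\Lh$, which are ideals (for $\Lr_3(\C)$ every nontrivial ideal contains $e_2$, so $p_1-p_2$ kills $e_2$; the case $\Lr_{3,-1}(\C)$ is dispatched with a terse ``similarly'' plus a reference to \cite{BD2}), and for existence it writes down an explicit two-parameter multiplication table realizing all $\Lr_{3,\la}(\C)$, $\la\neq-1$. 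Your Goursat classification replaces both steps at once: it shows the only $3$-dimensional subalgebras are (up to isomorphism) $\Ls\Ll_2(\C)$, $\Lr_2(\C)\oplus\C=\Lr_{3,0}(\C)$ and the Borel--Borel gluings $\Lr_{3,\la}(\C)$, $\la\neq 0$, so the Jordan-type algebra $\Lr_3(\C)$ never embeds at all, and the bijectivity test isolates $\la\neq-1$; existence then comes for free from the same construction, with no need for the explicit product formulas. This is arguably cleaner and makes the exclusion of $\Lr_{3,-1}(\C)$ fully transparent, at the price of the Goursat casework; the paper's explicit structure constants retain independent value as concrete examples. One small point you flag as a worry is easily closed: since the two Borels are distinct, $(p_1-p_2)(X_1)$ and $(p_1-p_2)(X_2)$ always span the two nilradicals, so bijectivity depends only on the class of $(p_1-p_2)(X_3)$ modulo that plane; with the normalizations $[H_A,E_A]=E_A$, $[H_B,E_B]=E_B$ this class is $(1+\la)h/2$ for \emph{any} choice of representatives (changing $H_A,H_B$ modulo the nilradicals or rescaling $E_A,E_B$ does not affect it), and the simultaneous reduction to the standard opposite pair of Borels is legitimate because $\Aut(\Ls\Ll_2(\C))$ acts $2$-transitively on Borels and the diagonal automorphism $(\phi,\phi)$ commutes with $p_1-p_2$. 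So no cleverer choice can rescue $\la=-1$, and your proof is complete.
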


\begin{proof}
Assume first that there exists a post-Lie algebra structure on $(\Lg,\Ln)$ for
$\Ln=\Ls\Ll_2(\C)$. Then $\Lg$ cannot be nilpotent by proposition $\ref{g nilpotent}$.
As we have seen, the case $\Lg=\Ls\Ll_2(\C)$ is possible. It remains to consider the $3$-dimensional
solvable, non-nilpotent Lie algebras. They are given by the Lie algebras $\Lr_{3,\la}(\C)$, and the
Lie algebra $\Lr_3(\C)$, with Lie brackets $[e_1,e_2]=e_2$ and $[e_1,e_3]=e_2+e_3$.
By proposition \ref{semisimple2} a post-Lie algebra structure on $(\Lg,\Ln)$ corresponds
to a subalgebra $\Lh$ of $\Ln \oplus \Ln$ for which the map 
$p_1-p_2: \Ln\oplus\Ln\rightarrow\Ln : (x,y)\mapsto x-y$ induces a bijection when restricted
to $\Lh$. Note that $\Lg$ is isomorphic to $\Lh$ in this case. In other words, we want
to classify the $3$-dimensional solvable, non-nilpotent Lie algebras $\Lh$ for which
there exists an injective Lie algebra homomorphism $\alpha$ such that $(p_1-p_2)\circ \alpha$ 
is bijective:
\[ 
\xymatrix{
\Lh\ar[r]^{\alpha}  \ar[dr]_{(p_1-p_2)\circ \alpha} &\Ln\oplus\Ln \ar[d]^{p_1-p_2}\\ & \Ln }
\]
Since both $p_1\circ \alpha$ and $p_2\circ \alpha$ are Lie algebra homomorphisms, their 
kernels are ideals of $\Lh$. If one of them equals $\Lh$, then the post-Lie algebra
product is either zero, or it is given by $x\cdot y=[x,y]$. In both cases $\Lh$ is isomorphic to
$\Ls\Ll_2(\C)$, as we have seen before. Suppose that one of these kernels equals zero. Then
this map is an injective Lie algebra homomorphism, so that $\Lh\simeq \Ln$ is simple. Hence,
if $\Lh$ is not simple, then  $\ker(p_1\circ \alpha)$ and  $\ker(p_2\circ \alpha)$ are both 
non-trivial ideals of $\Lh$. As a consequence we may assume that $\Lh$ is one of the Lie
algebras $\Lr_3(\C)$ or $\Lr_{3,\la}(\C)$. The non-trivial ideals of $\Lr_3(\C)$ are represented 
by $\langle e_2\rangle$ and $\langle e_2,e_3\rangle$. But then $p_1(\al(e_2))=p_2(\al(e_2))=0$,
so that $((p_1-p_2)\circ \al)(e_2)=0$. Hence  $(p_1-p_2)\circ \al$ is not bijective when
restricted to $\Lh=\Lr_3(\C)$. Hence there exists no post-Lie algebra structure on
$(\Lg,\Ln)$ with $\Ln=\Ls\Ll_2(\C)$ and $\Lg\simeq \Lr_3(\C)$. Similarly we see that there
is no post-Lie algebra structure for the unimodular Lie algebra $\Lr_{3,-1}(\C)$. This 
also follows from a general result in \cite{BD2}. \\
On the other hand it is easy to find a post-Lie algebra structure on $(\Lg,\Ln)$ for
$\Ln=\Ls\Ll_2(\C)$ and $\Lg\simeq \Lr_{3,\la}(\C)$ for all $\la\neq -1$ by direct calculation:
let $\al\neq \be$ be two complex parameters. If the brackets of $\Ln$ are given by 
\[
\{e_1,e_2\}=e_3,\;\{e_1,e_3\}=-2e_1,\; \{e_2,e_3\}=2e_2,
\]
then the following product 
$$
\begin{array}{ll}
e_2\cdot e_1 = -\al e_1+e_3, & e_3\cdot e_1 = \frac{2\be}{\al-\be}e_1, \\[0.2cm]
e_2\cdot e_2 = \al e_2+\frac{\al^2-\be^2}{4}e_3, & e_3\cdot e_2 = -\frac{2\be}{\al-\be}e_2-\be e_3, \\[0.2cm]
e_2\cdot e_3 = \frac{\be^2-\al^2}{2}e_1-2e_2, & e_3\cdot e_3 = 2\be e_1, 
\end{array}
$$
defines a post-Lie algebra structure on $(\Lg,\Ln)$, where $\Lg$ is given by the Lie brackets
\begin{align*}
[e_1,e_2] & =\al e_1,\\[0.2cm]
[e_1,e_3] & = -\frac{2\al}{\al-\be}e_1,\\[0.2cm]
[e_2,e_3] & =\frac{\be^2-\al^2}{2}e_1+\frac{2\be}{\al-\be}e_2+\be e_3.
\end{align*}
For $\be\neq 0$ and $\la=-\al/\be\neq -1$ it is isomorphic to $\Lr_{3,\la}(\C)$.
\end{proof}

\begin{rem}
As we have seen in corollary $\ref{g-sl2}$, a post-Lie algebra structure on
$(\Lg,\Ln)$ with $\Lg=\Ls\Ll_2(\C)$ only exists if  $\Ln$ is isomorphic to $\Ls\Ll_2(\C)$.
In that case we have just the two post-Lie algebra structures $x\cdot y=0$, or
$x\cdot y=[x,y]$, see proposition $\ref{gn-simple}$. In \cite{BD2} we show that if there
exists a post-Lie algebra structure on $(\Lg,\Ln)$ with $\Lg$ semisimple, then $\Ln$ cannot
be solvable. For $\dim(\Lg)=3$ this again yields corollary $\ref{g-sl2}$.
\end{rem}

\begin{prop}
Let $x\cdot y$ be a post-Lie algebra structure on $(\Lg,\Ln)$, where $\Lg$ is simple and
$\Ln$ is semisimple. Then $\Ln$ is also simple and either $x\cdot y=0$ and $[x,y]=\{x,y\}$, 
or $x\cdot y=[x,y]=-\{x,y\}$. 
\end{prop}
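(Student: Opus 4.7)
The plan is to reduce the statement to Proposition~\ref{gn-simple} by first showing that $\Ln$ must in fact be simple. Since $\Ln$ is semisimple, Proposition~\ref{semisimple2} provides a subalgebra $\Lh\le\Ln\oplus\Ln$ for which $p_1-p_2\colon\Lh\to\Lg$ is a Lie algebra isomorphism. Because $\Lg$ is simple, so is $\Lh$, and since $\Lg$ and $\Ln$ share the same underlying vector space $V$, we have $\dim\Lh=\dim\Lg=\dim\Ln$.

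To see that $\Ln$ is simple, I would analyse the two restricted projections $\pi_i:=p_i|_\Lh\colon\Lh\to\Ln$ for $i=1,2$. Each $\ker\pi_i$ is an ideal of the simple algebra $\Lh$ and therefore equals $0$ or $\Lh$. If $\pi_1=0$, then $\Lh\subseteq\{0\}\oplus\Ln$ and $(p_1-p_2)|_\Lh=-\pi_2$, so $\pi_2\colon\Lh\to\Ln$ is an injective Lie algebra homomorphism of equal dimension, hence an isomorphism; in particular $\Ln\cong\Lh$ is simple. The case $\pi_2=0$ is symmetric. In the remaining case both $\pi_i$ are injective, and by the dimension equality they are then both isomorphisms $\Lh\to\Ln$, again forcing $\Ln\cong\Lh$ to be simple.

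Once $\Ln$ is known to be simple, Proposition~\ref{gn-simple} applies directly to $(\Lg,\Ln)$ and yields the two listed post-Lie algebra structures $x\cdot y=0$ with $[x,y]=\{x,y\}$, or $x\cdot y=[x,y]=-\{x,y\}$. The only subtle point is that $\Ln$ is not assumed simple a priori; the crux of the argument is therefore the case-analysis above on the projections of $\Lh$, which rules out the possibility that $\Ln$ decomposes non-trivially as a sum of simple ideals. Everything else is a direct appeal to the results of Section~2 and to Proposition~\ref{gn-simple}.
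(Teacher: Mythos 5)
Your proof is correct, but it takes a different route from the paper's. You go through Proposition~\ref{semisimple2}: since $\Ln$ is semisimple you replace the post-Lie structure by a subalgebra $\Lh\le\Ln\oplus\Ln$ with $p_1-p_2$ an isomorphism onto $\Lg$, observe $\Lh$ is simple, and analyse the kernels of the restricted projections $\pi_i=p_i|_\Lh$ (each being $0$ or $\Lh$, and at least one nonzero); injectivity of a nonzero $\pi_i$ plus the dimension equality $\dim\Lh=\dim\Ln$ forces $\Ln\simeq\Lh\simeq\Lg$, hence $\Ln$ simple, after which Proposition~\ref{gn-simple} gives the two structures. This essentially re-runs, inside $\Ln\oplus\Ln$, the same kernel-of-projections argument that the paper already used to prove Proposition~\ref{gn-simple}, so the whole semisimple discussion stays within one uniform framework. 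The paper argues more directly: by Lemma~\ref{2.5} the left multiplication $L\colon\Lg\ra\Der(\Ln)=\ad(\Ln)\simeq\Ln$ is a Lie algebra homomorphism, and its kernel is an ideal of the simple algebra $\Lg$; either $L=0$, which immediately gives $x\cdot y=0$ and $[x,y]=\{x,y\}$ (so $\Ln=\Lg$ is simple), or $L$ is a monomorphism between algebras of equal dimension, so $\Ln\simeq\Lg$ is simple, and Proposition~\ref{gn-simple} finishes. The paper's version buys brevity (no appeal to Proposition~\ref{semisimple2}, no case split on two projections), while yours has the mild advantage of not needing Lemma~\ref{2.5} separately and of exhibiting $\Ln$ explicitly as an isomorphic image of $\Lh$; both are complete and valid.
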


\begin{proof}
By lemma $\ref{2.5}$ we know that the map $L\colon \Lg \ra \Der(\Ln)=\ad(\Ln)\simeq \Ln$, 
$x\mapsto L(x)$ is a Lie algebra homomorphism. Its kernel is an ideal in $\Lg$. 
If $L$ is the zero map, then $x\cdot y=0$ for all $x,y\in \Lg$. Otherwise $L$ is a monomorphism and 
$\Lg$ embedds into $\Ln$, so that $\Ln\simeq \Lg$ is also simple. The claim follows from 
proposition $\ref{gn-simple}$.
\end{proof}

\end{document}